\DeclareFontFamily{U} {cmr}{}
\DeclareFontShape{U}{cmr}{m}{n}{
	<-6> cmr5
	<6-7> cmr6
	<7-8> cmr7
	<8-9> cmr8
	<9-10> cmr9
	<10-12> cmr10
	<12-> cmr12}{}
\DeclareSymbolFont{Xcmr} {U} {cmr}{m}{n}
\DeclareMathSymbol{\Delta}{\mathord}{Xcmr}{'001}
\DeclareMathSymbol{\Upsilon}{\mathord}{Xcmr}{'007}
\DeclareMathSymbol{\Omega}{\mathord}{Xcmr}{'012}
\setlist[itemize]{topsep=0ex,itemsep=0ex,parsep=0.4ex}
\setlist[enumerate]{topsep=0ex,itemsep=0ex,parsep=0.4ex}
\declaretheorem[name = Theorem, numberwithin = section, style = plain]{thm}
\declaretheorem[name = Corollary, numberlike = thm, style = plain]{cor}
\declaretheorem[name = Conjecture, numberlike = thm, style = plain]{conj}
\declaretheorem[name = Definition, numberlike = thm, style = definition]{defi}
\declaretheorem[name = Lemma, numberlike = thm, style = plain]{lem}
\declaretheorem[name = Proposition, numberlike = thm, style = plain]{prop}
\declaretheorem[name = Problem, numberlike = thm, style = plain]{prob}
\declaretheorem[name = Remark, numberlike = thm, style = definition]{rmk}
\crefname{thm}{Theorem}{Theorems}
\crefname{lem}{Lemma}{Lemmas}
\crefname{obs}{Observation}{Observations}
\crefname{conj}{Conjecture}{Conjectures}
\crefname{claim}{Claim}{Claims}
\crefname{prob}{Problem}{Problems}
\crefname{prop}{Proposition}{Propositions}
\crefname{cor}{Corollary}{Corollaries}
\crefname{rmk}{Remark}{Remarks}
\newlist{property}{enumerate}{1}
\setlist[property]{label = \arabic*., ref = \arabic*}
\crefname{propertyi}{property}{properties}
\DeclareFontFamily{U}{matha}{\hyphenchar\font45}
\DeclareFontShape{U}{matha}{m}{n}{
	<5> <6> <7> <8> <9> <10> gen * matha
	<10.95> matha10 <12> <14.4> <17.28> <20.74> <24.88> matha12
}{}
\DeclareSymbolFont{matha}{U}{matha}{m}{n}
\DeclareMathSymbol{\specialuparrow}{\mathrel}{matha}{"D2}
\DeclareMathSymbol{\specialrightarrow}{\mathrel}{matha}{"D1}
\renewcommand*{\backref}[1]{}
\renewcommand*{\backrefalt}[4]{
	\ifcase #1 Not cited.%
	\or $\specialuparrow$#2%
	\else $\specialuparrow$#2%
	\fi%
}
\renewcommand{\epsilon}{\varepsilon}
\renewcommand{\ge}{\geqslant}
\renewcommand{\le}{\leqslant}
\renewcommand{\geq}{\geqslant}
\renewcommand{\leq}{\leqslant}
\renewcommand{\subset}{\subseteq}
\DeclarePairedDelimiter{\abs}{\lvert}{\rvert}
\DeclarePairedDelimiter{\set}{\{}{\}}
\newcommand{\defn}[1]{\textcolor{Maroon}{\emph{#1}}}
\newcommand*{\bE}{\mathbb{E}}
\newcommand*{\bP}{\mathbb{P}}
\newcommand*{\bZ}{\mathbb{Z}}
\newcommand*{\cO}{\mathcal{O}}
\title{The Zarankiewicz problem on tripartite graphs} 
\date{\today}
\begin{document}

\author{Francesco Di Braccio\footnotemark[1] \qquad Freddie Illingworth\footnotemark[2]}

\maketitle

\begin{abstract}
    In 1975, Bollob\'{a}s, Erd\H{o}s, and Szemer\'{e}di asked for the smallest $\tau$ such that an $n \times n \times n$ tripartite graph with minimum degree $n + \tau$ must contain $K_{t, t, t}$, conjecturing that $\tau = \cO(n^{1/2})$ for $t = 2$. We prove that $\tau = \cO(n^{1 - 1/t})$ which confirms their conjecture and is best possible assuming the widely believed conjecture that the Zarankiewicz number satisfies $z(n; t) = \Theta(n^{2 - 1/t})$. Our proof uses a density increment argument.
    
    We also construct an infinite family of extremal graphs that are pairwise far apart (requiring the change of $\Omega(n^2)$ edges to get between any two).
\end{abstract}

\renewcommand{\thefootnote}{\fnsymbol{footnote}} 

\footnotetext[0]{\emph{2020 MSC}: 05C35 (Extremal problems in graph theory)}

\footnotetext[1]{Department of Mathematics, London School of Economics, London, UK (\textsf{\href{mailto:francescodibraccio@outlook.com}{francescodibraccio\allowbreak@outlook\allowbreak.com}})}

\footnotetext[2]{Department of Mathematics, University College London, London, UK (\textsf{\href{mailto:f.illingworth@ucl.ac.uk}{f.illingworth@ucl.ac.uk}})}

\renewcommand{\thefootnote}{\arabic{footnote}} 

\section{Introduction}\label{sec:intro}

Tur\'{a}n-type problems are some of the oldest and most fundamental in Combinatorics. They ask how dense must a graph be in order to force the presence of a particular substructure. The first results were those of Mantel~\cite{Mantel07} and Tur\'{a}n~\cite{Turan41}, who determined the maximum number of edges in an $n$-vertex graph without a triangle or complete graph of size $r$, respectively. In 1946, Erd\H{o}s and Stone~\cite{ErdosStone46} generalised Tur\'an's theorem by showing that any $n$-vertex graph with at least $(1 - \frac{1}{r - 1} + o(1)) \binom{n}{2}$ edges contains a complete $r$-partite graph with $t$ vertices in each part, denoted \defn{$K_r(t)$}. The complete multipartite graphs are particularly important since the number of edges required to force the presence of these determines, up to lower order terms, the number of edges required to force the presence of any non-bipartite graph~\cite{ErdosSimonovits66}.

An old class of Tur\'{a}n-type problems that are still not well-understood asks for density conditions which guarantee a $K_r(t)$ inside a host graph that is itself $k$-partite (for some $k \geq r$). Questions of this type are often remarkably difficult, even for small values of $r$, $t$, and $k$. For instance, the notorious Zarankiewicz problem~\cite{Zarankiewicz51} from 1951 asks for the maximum number of edges, \defn{$z(n; t)$}, in a bipartite graph with parts of size $n$ that does not contain $K_{t, t} = K_2(t)$. The celebrated K\H{o}v\'{a}ri-S\'{o}s-Tur\'{a}n theorem~\cite{KST54} states that $z(n; t) = \cO(n^{2 - 1/t})$. Though this bound is believed to be tight for all $t \geq 2$, no matching lower bound is known for $t \geq 4$ despite receiving considerable attention -- we direct the reader to the comprehensive survey of Fur\"{e}di and Simonovits~\cite{FurediSimonovits13}. 

For $k$-partite host graphs with $k \geq 3$, even less is known. Interest in these problems can be traced back to the seminal 1975 paper of Bollob\'{a}s, Erd\H{o}s, and Szemer\'{e}di~\cite{BES75}. They initiated the extensive study of multipartite Tur\'{a}n-type problems~\cite{Jin92,SzaboTardos06,HaxellSzabo06,LTZ22} which has been central to Combinatorics with applications in graph arboricity~\cite{Alon88}, list colouring~\cite{Haxell01}, defective colouring~\cite{HST03}, and strong chromatic numbers~\cite{Haxell04}. 

One of their main questions, which has so far remained elusive, concerned the first case, $k = r = 3$. That is, the following tripartite version of the Zarankiewicz problem.

\begin{prob}[Bollob\'{a}s-Erd\H{o}s-Szemer\'{e}di]\label{prob:BES}
    For each $t$, what is the smallest $\tau = \tau(n)$ such that any $n \times n \times n$ tripartite graph with minimum degree at least $n + \tau$ contains $K_3(t)$\textup{?}
\end{prob}

In the above, an \defn{$n \times n \times n$ tripartite graph} is a tripartite graph whose parts have size $n$. The reason to centre the minimum degree at $n$ is that there are $n \times n \times n$ tripartite graphs with minimum degree $n$ which are bipartite, and thus contain no triangles (consider the graph $G$ with parts $V_1$, $V_2$, and $V_3$ that contains all edges between $V_1$ and $V_2$, and between $V_1$ and $V_3$). On the other hand, Graver, answering a question of Erd\H{o}s (cf.\ \cite{BES75}), showed that $\tau = 1$ suffices to guarantee the presence of a triangle, and so minimum degree $n$ is the threshold at which the tripartite nature of the problem appears.

Bollob\'{a}s, Erd\H{o}s, and Szemer\'{e}di~\cite{BES75} proved that\footnote{In their paper~\cite[Thm.~2.6]{BES75} they state $\tau = \cO(n^{1 - 1/t^2})$. However, as observed by Bhalkikar and Zhao~\cite{BZ23}, a factor of three is missing from their calculations.} $\tau = \cO(n^{1 - 1/(3t^2)})$ for $t \geq 2$. They highlighted the case $t = 2$, 
for which their result yields $\tau = \cO(n^{11/12})$, and they conjectured that the correct value is actually $\tau = \Theta(n^{1/2})$. 
Taking the graph $G$ defined above and adding an $n \times n$ bipartite $K_{2,2}$-free graph with minimum degree $\Omega(n^{1/2})$ between $V_2$ and $V_3$ provides an extremal example showing that $\Theta(n^{1/2})$ would be tight. 

\Cref{prob:BES} saw no progress until a paper of Bhalkikar and Zhao~\cite{BZ23} which gives an alternative proof of $\tau = \cO(n^{1 - 1/(3t^2)})$ and provides a second extremal example based on a blown-up $C_5$. They remarked that, for $t = 2$, ``$cn^{11/12}$ is a natural additive term for \cref{prob:BES} under typical approaches for extremal problems'' and proving $\Theta(n^{1/2})$ may be ``hard given the presence of many non-isomorphic constructions''. Recently, in simultaneous and independent work, Chen, He, Lo, Luo, Ma, and Zhao~\cite{CHLLMZ24} improved the bound to $\tau = \cO(n^{1 - 1/t(t + 1)})$ and introduced yet a new family of extremal examples.

The purpose of this paper is two-fold. The first is to prove Bollob\'{a}s, Erd\H{o}s, and Szemer\'{e}di's conjecture that $\tau = \Theta(n^{1/2})$ when $t = 2$ as well as improve the answer to \cref{prob:BES} to $\cO(n^{1 - 1/t})$. 

\begin{thm}\label{thm:main}
    For every positive integer $t$, there is a constant $K$ such that, if $G$ is an $n \times n \times n$ tripartite graph and $\delta(G) \geq n + K n^{1 - 1/t}$, then $G$ contains $K_{t, t, t}$.
\end{thm}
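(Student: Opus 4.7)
The plan is to argue by contradiction: suppose $G$ is $K_{t,t,t}$-free with $\delta(G) \geq n + \tau$, where $\tau = K n^{1-1/t}$, and derive a contradiction for $K$ sufficiently large. Summing the degree hypothesis over $V_1$ yields $e(V_1, V_2) + e(V_1, V_3) \geq n(n+\tau)$, and cyclically over $V_2$ and $V_3$, so by pigeonhole some pair---WLOG $(V_2, V_3)$---satisfies $e(V_2, V_3) \geq \tfrac{1}{2} n(n+\tau)$. This is vastly above the K\H{o}v\'{a}ri-S\'{o}s-Tur\'{a}n threshold $\cO(n^{2-1/t})$, so by supersaturation $G[V_2,V_3]$ contains $\Omega(n^{2t})$ copies of $K_{t,t}$.

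To upgrade a copy $(B,C)$ of $K_{t,t}$ in $G[V_2, V_3]$ to a $K_{t,t,t}$ in $G$ it suffices to find $t$ common neighbors of $B \cup C$ in $V_1$. The double-counting identity
\[
\sum_{(B,C)} |N_{V_1}(B \cup C)| = \sum_{v \in V_1} N_{t,t}(v),
\]
where the left sum ranges over all $K_{t,t}$-copies in $G[V_2, V_3]$ and $N_{t,t}(v)$ denotes the number of $K_{t,t}$-copies in the link $G[N(v)\cap V_2, N(v)\cap V_3]$, reduces the task to showing that the right-hand side exceeds $(t-1)$ times the number of $K_{t,t}$-copies in $G[V_2, V_3]$; pigeonhole then gives some $(B,C)$ with at least $t$ common neighbors in $V_1$, hence a $K_{t,t,t}$. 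For each $v \in V_1$, I would lower-bound $N_{t,t}(v)$ via Sidorenko's theorem for $K_{t,t}$, with part sizes controlled by $d_{V_2}(v) + d_{V_3}(v) \geq n+\tau$ and edge count in the link controlled by summing the codegree estimate $|N(u)\cap N(w)\cap V_1| \geq d_{V_1}(u) + d_{V_1}(w) - n$ over $uw \in E(V_2, V_3)$.

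The principal obstacle is that all these inequalities are essentially tight at the extremal example discussed in the introduction (two pairs complete, the third a Zarankiewicz-extremal bipartite graph), so no single counting pass can succeed; the naive count misses the target by only a bounded multiplicative factor. This is where the advertised density increment enters. When the count falls short, the deficit should localise in a sub-tripartite-graph $G' \subseteq G$ on parts of sizes $n'_1, n'_2, n'_3 \leq n$, obtained by restricting to the common neighborhoods of carefully chosen $t$-sets in one of the parts, and this sub-graph can be arranged to have strictly larger normalised excess $\tau'/(\min_i n'_i)^{1-1/t}$, say up by a multiplicative constant. The hard part will be engineering the restriction so that the min-degree-like hypothesis is preserved (or strengthened) on $G'$ despite the loss of vertices; once this is done, iterating the step $\cO(\log n)$ times drives the normalised excess above every constant, at which point one of the bipartite subgraphs of the surviving tripartite-graph is essentially complete and a $K_{t,t,t}$ is produced directly, yielding the contradiction.
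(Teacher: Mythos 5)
Your opening moves (degree sum, pigeonhole to a dense pair, supersaturation, and the double count $\sum_{(B,C)}|N_{V_1}(B\cup C)|=\sum_{v}N_{t,t}(v)$) are fine as far as they go, but the proof stops exactly where the difficulty begins. The entire content of the theorem is the step you defer: "when the count falls short, the deficit should localise \dots and this sub-graph can be arranged to have strictly larger normalised excess $\tau'/(\min_i n_i')^{1-1/t}$ \dots the hard part will be engineering the restriction so that the min-degree-like hypothesis is preserved." Nothing in the proposal shows such a restriction exists, and there is a concrete reason to doubt the invariant you chose: after passing to common neighbourhoods $N(b_1,\dotsc,b_t)$ the three parts become wildly unbalanced, and a vertex of the shrunken graph can lose almost all of its degree into the shrunken parts, so a hypothesis of the form $\delta(G')\geq n'+\tau'$ simply need not survive (and the paper never attempts to re-establish it). A second, related gap is in your lower bound for $\sum_v N_{t,t}(v)$: the inclusion--exclusion codegree bound $|N(u)\cap N(w)\cap V_1|\geq d_{V_1}(u)+d_{V_1}(w)-n$ is vacuous for all edges $uw$ whose endpoints have small degree into $V_1$, which can be most edges of the dense pair; without a mechanism for selecting edges where this bound bites, the Sidorenko-type count has no usable input. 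So as written this is a plan with the key lemma missing, not a proof.

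For comparison, the paper's density increment is run on different quantities precisely to avoid re-proving a minimum-degree hypothesis on subgraphs. It first regularises the degrees into functions $f^{\pm}$ and isolates \emph{booster} edges $uv$ (those with $f^{+}(u)\leq f^{+}(v)$), for which inclusion--exclusion is automatically effective: every booster has codegree at least $\tau$ (\cref{lem:booster}) --- this is the tool your codegree step is missing. \Cref{lem:initialconfig} then produces either a constant-density bipartite graph of boosters or many heavy edges, and the increments are (a) on the codegree upper bound of a dense booster subgraph, shrunk by a factor $|B|^{-1/t^{2}}$ per round and iterated only $\cO(t)$ times until it contradicts the $\Omega(\tau)$ lower bound, and (b) on the booster strength $r$ of a maximal "squad." If you want to salvage your outline, you would need to replace the vague "normalised excess" increment with an explicit, verifiable quantity of this kind and prove the corresponding increment lemma; that lemma is the theorem.
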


If $z(n; t) = \Omega(n^{2 - 1/t})$, as is widely believed, then this theorem is best possible up to the value of the constant $K$ and so \cref{thm:main} provides a complete answer to \cref{prob:BES}. 

The second purpose is to show that there are even more non-isomorphic constructions than previously thought. We exhibit, in \cref{sec:extremal}, an infinite family of extremal examples which are far from each other (requiring the change of $\Omega(n^2)$ edges to get between any two) and very different from those in~\cite{CHLLMZ24}. These constructions are based on the classical Andr\'{a}sfai graphs.

It should be remarked that recent work of Han and Zhao~\cite{HZ24} solves the variant of \cref{prob:BES} asking for the minimum number of edges guaranteeing a $K_{t, t, t}$. For the Erd\H{o}s-Stone theorem and Zarankiewicz problem the minimum degree and edge version are essentially equivalent.\footnote{The proof of the Erd\H{o}s-Stone theorem follows from the minimum degree version by discarding vertices of degree at most $(1 - 1/r + o(1))n$. If $z(n; t) = \Theta(n^{2 - 1/t})$, then for each $n$, by discarding vertices of low degree and randomly sampling within each vertex class, there are $n \times n$ bipartite graphs with minimum degree $\Theta(n^{1 - 1/t})$ that do not contain $K_{t, t}$.} However, for the tripartite version, the difference is more than cosmetic. The graph $G$, described above, shows that the answer to the edge problem is at least $2n^2$. Han and Zhao~\cite{HZ24} showed that the answer is $(2 + o(1)) n^2$ and all extremal examples have a similar structure. We will see, in \cref{sec:extremal}, that \cref{prob:BES} has many, very different extremal examples, including some with as few as $(3/2 + o(1))n^2$ edges. \Cref{prob:BES} lives in a sparser regime than the edge version.

The paper is organised as follows. In \cref{sec:extremal}, we discuss extremal examples and present our construction based on the Andr\'{a}sfai graphs. We introduce some counting tools for finding complete multipartite subgraphs in \cref{sec:counting_tools}. In \cref{sec:boosters}, we introduce the notion of a \emph{booster}, which is a useful type of edge that is guaranteed to have large codegree, and prove some structural lemmas about boosters in tripartite graphs. The proof of \cref{thm:main} is distributed over \cref{sec:main_thm}. In \cref{sec:initialconfig}, we argue that a balanced tripartite graph with large minimum degree either contains a dense subgraph of boosters or many edges with very large codegree. In \cref{sec:densebooster,sec:heavy}, we consider these two regimes separately and show that each of them forces the presence of a $K_{t, t, t}$. We conclude by discussing some open problems in \cref{sec:conclud}.

\section{Extremal examples}\label{sec:extremal}

In this section we describe an infinite family of extremal examples which show that \cref{thm:main} is best possible, up to the value of the constant $K$, assuming the widely-believed conjecture that the Zarankiewicz numbers satisfy $z(n; t) = \Omega(n^{2 - 1/t})$. 

\begin{thm}\label{thm:extremal}
    Suppose there are positive constants $c$ and $C$ such that $cn^{2 - 1/t} \leq z(n; t) \leq C n^{2 - 1/t}$ holds for all $n$. For every integer $k \geq \frac{2C}{3c}$ and for every $n$ that is a sufficiently large multiple of $3k$, there is a $K_{t, t, t}$-free $n \times n \times n$ tripartite graph $G_{k, n}$ with minimum degree $n + (c/4) \cdot n^{1 - 1/t}$. 
    
    Further, for $k \neq k'$, at least $\Omega(n^2)$ edges of $G_{k, n}$ need to be deleted to obtain a subgraph of $G_{k', n}$.
\end{thm}

We remind the reader that $z(n; t) = \cO(n^{2 - 1/t})$ is already known -- this is the classical K\H{o}v\'{a}ri-S\'{o}s-Tur\'{a}n theorem~\cite{KST54}. They believed~\cite[\S6]{KST54} that \ $z(n; t) = \Theta(n^{2 - 1/t})$ and so the hypothesis of \cref{thm:extremal} holds. Many researchers also conjecture this to be the case; see, for example, \cite[p.~257]{Furedi91}, \cite[\S6, Conj.~15]{Bollobas04}, and \cite[Conj.~2.24]{FurediSimonovits13}. The conjecture is known to hold for $t = 2$~\cite{KST54} and $t = 3$~\cite{Brown66}.

In our pursuit of \cref{thm:extremal}, we will start with the dense triangle-free $n \times n \times n$ tripartite graphs given by the following lemma.

\begin{lem}\label{lem:Andrasfai}
    For all positive integers $n, k, \sigma$ with $n = 3k \sigma$ there is a triangle-free $n \times n \times n$ tripartite graph $G'_{k, n} = (V_1, V_2, V_3; E_0)$ satisfying the following properties\textup{:} there are sets $U_1 \subset V_1$ and $U_2 \subset V_2$ such that
    \begin{property}[noitemsep, label = \textup{\arabic{*}.}, ref = \textup{\arabic{*}}]
        \item \label{property:1} $\abs{U_1} = \abs{U_2} = \sigma$,
        \item \label{property:2} every vertex $v \in U_1$ has $N(v) = V_2$,
        \item \label{property:3} every vertex $v \in U_2$ has $N(v) = V_1$,
        \item \label{property:4} every vertex $v \in V(G'_{k, n}) \setminus (U_1 \cup U_2)$ has degree $n + \sigma$.
    \end{property}
    Further, for $k \neq k'$, at least $\Omega(n^2)$ edges of $G'_{k, n}$ need to be deleted to obtain a subgraph of $G'_{k', n}$.
\end{lem}

We will prove \cref{lem:Andrasfai} in \cref{sec:Andrasfai}. The parameter $k$ will control the macroscopic structure of the graph $G$.
Note that every vertex of $G_0$ that is not in $U_1 \cup U_2$ has degree at least $n + \sigma$ and all vertices in $U_1 \cup U_2$ have degree $n$. We now add edges between $U_1$ and $V_3$ and between $U_2$ and $V_3$ according to the following lemma. 

\begin{lem}\label{lem:existenceKtt}
    If there are positive constants $c, C > 0$ such that $cn^{2 - 1/t} \leq z(n; t) \leq Cn^{2 - 1/t}$ holds for all $n$, then for any $\sigma \leq \frac{c}{2C} \cdot n$ there is a $\sigma \times n$ bipartite graph $H$ which does not contain $K_{t,t}$ and in which every vertex in the part of size $\sigma$ has degree at least $(c/4) \cdot n^{1 - 1/t}$.
\end{lem}

\begin{proof}
    Let $G$ be an $n \times n$ bipartite graph with $c n^{2 - 1/t}$ edges that does not contain $K_{t, t}$. Iteratively delete vertices of degree at most $(c/4) \cdot n^{1 - 1/t}$ from $G$ until no more remain. This deletes at most $(c/2) \cdot n^{2 - 1/t}$ edges in total and so at least $(c/2) \cdot n^{2 - 1/t}$ edges remain. Call the resulting graph $G_0 = (A_0, B_0; E_0)$ and let $n_0 \coloneqq \abs{A_0} \geq \abs{B_0}$. Since $G_0$ does not contain $K_{t, t}$, $(c/2) \cdot n^{2 - 1/t} \leq C n_0^{2 - 1/t}$ and so $n_0 \geq (\frac{c}{2C})^{t/(2t - 1)} \cdot n \geq \frac{c}{2C} \cdot n$. Note that $G_0$ has minimum degree at least $(c/4) \cdot n^{1 - 1/t}$. Deleting $\abs{A_0} - \sigma$ vertices from $A_0$ and adding $n - \abs{B_0}$ isolated vertices to $B_0$ gives the required bipartite graph.
\end{proof}

We may now prove \cref{thm:extremal} assuming \cref{lem:Andrasfai}.

\begin{proof}[Proof of \cref{thm:extremal}]
    Let $c$ and $C$ be positive constants such that $cn^{2 - 1/t} \leq z(n; t) \leq Cn^{2 - 1/t}$ and let $k \geq \frac{2C}{3c}$ be an integer. Let $\sigma$ be a large integer and $n = 3k \sigma$. Provided $\sigma$ is sufficiently large compared to $k$, we have $\sigma \geq (c/4) \cdot n^{1 - 1/t}$. Also $\sigma = \frac{n}{3k} \leq \frac{c}{2C} \cdot n$.

    Let $G'_{k, n}$ be the graph obtained from \cref{lem:Andrasfai} and let $H$ be the graph obtained from \cref{lem:existenceKtt}. We form $G_{k, n}$ by adding a copy of $H$ between $U_1$ and $V_3$ and a copy of $H$ between $U_2$ and $V_3$. Plainly $G$ is $n \times n \times n$ tripartite and has minimum degree at least $n + \min\set{\sigma, (c/4) \cdot n^{1 - 1/t}} = n + (c/4) \cdot n^{1 - 1/t}$.

    Let $k \neq k'$. Since the number of edges in copies of $H$ is at most $2z(n; t) = \cO(n^{2 - 1/t}) = o(n^2)$ and at least $\Omega(n^2)$ edges of $G'_{k, n}$ need to be deleted to obtain a subgraph of $G'_{k', n}$, the number of edges of $G_{k, n}$ that need to be deleted to obtain a subgraph of $G_{k', n}$ is $\Omega(n^2)$. 
    
    It remains to check that $G = G_{k, n}$ is $K_{t, t, t}$-free. Suppose there is a copy of $K_{t, t, t}$ in $G$: let its parts be $A_1 \subset V_1$, $A_2 \subset V_2$, and $A_3 \subset V_3$ where $\abs{A_1} = \abs{A_2} = \abs{A_3} = t$. The complete bipartite graph $G[A_1, A_3]$ is a copy of $K_{t, t}$. On the other hand, the graph $G[U_1, V_3]$ is isomorphic to $H$ and so is $K_{t, t}$-free. Thus, $A_1$ is not a subset of $U_1$. Similarly $A_2$ is not a subset of $U_2$. Let $v_1 \in A_1 \setminus U_1$, $v_2 \in A_2 \setminus U_2$, and $v_3 \in A_3$. Then $v_1 v_2 v_3$ is a triangle in $G$. However, none of these three vertices is in $U_1 \cup U_2$ and so $v_1 v_2 v_3$ is a triangle in $G'_{k, n}$. This contradicts the fact that $G'_{k, n}$ is triangle-free. Thus $G$ is indeed $K_{t, t, t}$-free.
\end{proof}

\subsection{\texorpdfstring{Andr\'{a}sfai}{Andrásfai} graphs}\label{sec:Andrasfai}

We will now prove \cref{lem:Andrasfai}. We start by discussing the \defn{Andr\'{a}sfai graphs}, first introduced by Andr\'{a}sfai~\cite{Andrasfai62}, which form an infinite family of 3-colourable triangle-free graphs with large minimum degree (compared to their number of vertices). The $k$th Andr\'{a}sfai graph, $\Gamma_k$, has vertex set $\bZ/(3k - 1)\bZ$ and the neighbourhood of each $i \in \bZ/(3k - 1)\bZ$ is $\set{i + k, i + k + 1, \dotsc, i + 2k - 1}$. This is a vertex-transitive graph (it is a Cayley graph). Following $\Gamma_1 = K_2$, the first few Andr\'{a}sfai graphs are depicted in \cref{fig:Andrasfai}. We remark that the extremal example of Bhalkikar and Zhao~\cite{BZ23} is based on the 5-cycle, which is $\Gamma_2$.

\begin{figure}[H]
    \centering
    \begin{subfigure}{0.2\textwidth}
        \centering
        \begin{tikzpicture}
            \foreach \i in {0,...,4}{
                \tkzDefPoint(72*\i+90:1){v_{\i}}
            }
            \tkzDrawPolygon[Blue](v_{0},v_{2},v_{4},v_{1},v_{3})
            \foreach \i in {0,...,4}{
                \tkzDrawPoint(v_{\i})            
            }
        \end{tikzpicture}
    \end{subfigure}
    \begin{subfigure}{0.2\textwidth}
        \centering
        \begin{tikzpicture}
            \foreach \i in {0,...,7}{
                \tkzDefPoint(45*\i+90:1){v_{\i}}
            }
            \tkzDrawPolygon[Blue](v_{0},v_{3},v_{6},v_{1},v_{4},v_{7},v_{2},v_{5})
            \tkzDrawSegments[Blue](v_{0},v_{4} v_{1},v_{5} v_{2},v_{6} v_{3},v_{7})
            \foreach \i in {0,...,7}{
                \tkzDrawPoint(v_{\i})         
            }
        \end{tikzpicture}
    \end{subfigure}
    \begin{subfigure}{0.2\textwidth}
        \centering
        \begin{tikzpicture}
            \foreach \i in {0,...,10}{
                \tkzDefPoint(32.7*\i+90:1){v_{\i}}
            }
            \tkzDrawPolygon[Blue](v_{0},v_{4},v_{8},v_{1},v_{5},v_{9},v_{2},v_{6},v_{10},v_{3},v_{7})
            \tkzDrawPolygon[Blue](v_{0},v_{5},v_{10},v_{4},v_{9},v_{3},v_{8},v_{2},v_{7},v_{1},v_{6})            
            \foreach \i in {0,...,10}{
                \tkzDrawPoint(v_{\i})            
            }
        \end{tikzpicture}
    \end{subfigure}
    \begin{subfigure}{0.2\textwidth}
        \centering
        \begin{tikzpicture}
            \foreach \i in {0,...,13}{
                \tkzDefPoint(25.7*\i+90:1){v_{\i}}
            }
            \tkzDrawPolygon[Blue](v_{0},v_{5},v_{10},v_{1},v_{6},v_{11},v_{2},v_{7},v_{12},v_{3},v_{8},v_{13},v_{4},v_{9})
            \tkzDrawPolygon[Blue](v_{0},v_{6},v_{12},v_{4},v_{10},v_{2},v_{8})
            \tkzDrawPolygon[Blue](v_{1},v_{7},v_{13},v_{5},v_{11},v_{3},v_{9})
            \tkzDrawSegments[Blue](v_{0},v_{7} v_{1},v_{8} v_{2},v_{9} v_{3},v_{10} v_{4},v_{11} v_{5},v_{12} v_{6},v_{13})
            \foreach \i in {0,...,13}{
                \tkzDrawPoint(v_{\i})            
            }
        \end{tikzpicture}
    \end{subfigure}
    \caption{The Andr\'{a}sfai graphs $\Gamma_2$, $\Gamma_3$, $\Gamma_4$, and $\Gamma_5$.}\label{fig:Andrasfai}
\end{figure}

The graph $\Gamma_{k + 1}$ has the following properties.
\begin{itemize}
    \item It is edge-maximal triangle-free: it is triangle-free and the addition of any edge results in a triangle.
    \item It is $(k + 1)$-regular on $3k + 2$ vertices.
    \item It is 3-partite with one possible partition being $\set{0, 1, \dotsc, k}$, $\set{k + 1, k + 2, \dotsc, 2 k + 1}$, and $\set{2k + 2, 2k + 3, \dotsc, 3k + 1}$.
\end{itemize}
Notice that the first two parts of the partition given above contain $k + 1$ vertices while the final part contains only $k$. 

In order to obtain a balanced tripartite graph we will need to weight the vertices of $\Gamma_{k + 1}$ so that each part has an equal weight. To be precise, given a graph $G$ and a \defn{weighting} $\omega \colon V(G) \to \bZ^+$, the \defn{weighted graph} $(G, \omega)$ is the graph obtained by replacing vertex $v$ of $G$ by an independent set $I_v$ of size $\omega(v)$ and each edge $uv$ by a complete bipartite graph between classes $I_u$ and $I_v$. Crucially, if $G$ is triangle-free, then the graph $(G, \omega)$ will be too. We will need the following lemma about weighted graph for our claim about the $G'_{k, n}$ being $\Omega(n^2)$ edges apart.

\begin{lem}\label{lem:homomorphism}
    Let $G$ be an edge-maximal triangle-free graph in which all neighbourhoods are distinct and $H$ be a triangle-free graph. Suppose that $G$ is a subgraph of some weighted graph $(H, \omega)$. Then $G$ is a subgraph of $H$.
\end{lem}

\begin{proof}
    A \defn{homomorphism} from a graph $G_1$ to a graph $G_2$ is a function $\phi \colon G_1 \to G_2$ such that, for every edge $uv \in G_1$, we have $\phi(u) \phi(v) \in G_2$ (that is, $\phi$ is edge-preserving). If $G$ is a subgraph of $(H, \omega)$, then there is a homomorphism from $G$ to $(H, \omega)$ (the function mapping $G$ to its embedding in $(H, \omega)$). Projecting this function down from $(H, \omega)$ to $H$ gives a homomorphism from $G$ to $H$ -- denote this homomorphism by $\phi$. 
    
    First suppose that $\phi$ is not injective. Then there are vertices $u, v \in V(G)$ such that $\phi(u) = \phi(v)$. Since all neighbourhoods of $G$ are distinct, we may and will assume there is a vertex $w$ of $G$ that is adjacent to $u$ and not to $v$. Let $G' = G + vw$. Now $\phi(v) \phi(w) = \phi(u) \phi(w)$ and $u$ is adjacent to $w$ so, in fact, $\phi$ is a homomorphism from $G'$ to $H$. But $G$ is an edge-maximal triangle-free graph and so $G'$ contains a triangle. This triangle will map, under $\phi$, to a triangle in $H$. This contradicts the triangle-freeness of $H$. Thus $\phi$ is injective. But then $\phi$ is an embedding of $G$ into $H$ and so $G$ is a subgraph of $H$.
\end{proof}

\Cref{fig:weighting} displays $\Gamma_{k + 1}$ together with a weighting $\omega$ (the number by each vertex $v$ is the weight $\omega(v)$). To orient the reader, the rightmost vertex (which has weight 1) corresponds to the vertex 0 in $\Gamma_{k + 1}$ and the other vertices proceed anticlockwise. We have drawn the edges incident to the top left vertex (which has weight 2 and corresponds to vertex $k + 1$ in $\Gamma_{k + 1}$) as an aid.

\begin{figure}[ht!]
    \centering
    \begin{tikzpicture}
        \foreach \i in {0,1,2}{
            \tkzDefPoint(120*\i-90:2){v_{\i}}
            \tkzDrawPoint(v_{\i})
            \tkzDrawEllipse[white, fill = gray!20](v_{\i},0.4,2.4,120*\i-90)
            \foreach \j in {-4,...,4}{
                \tkzDefShiftPoint[v_{\i}](120*\i:0.5*\j){v_{\i,\j}}
            }
            \foreach \j in {-4,-3,-2,2,3,4}{
                \tkzDrawPoint(v_{\i,\j})
            }
            \tkzLabelPoint[shift={(120*\i+90:0.2)},rotate=120*\i](v_{\i}){$\dotsb$}
        }
        \foreach \j in {-4,-3,-2,2,3,4}{
            \tkzLabelPoint[below](v_{0,\j}){3}
        }

        \tkzLabelPoint[above left](v_{2,-4}){2}
        \tkzLabelPoint[above left](v_{2,4}){1}
        \tkzLabelPoint[above right](v_{1,4}){2}
        \tkzLabelPoint[above right](v_{1,-4}){1}

        \foreach \j in {-3,-2,2,3}{
            \tkzLabelPoint[above left](v_{2,\j}){3}
            \tkzLabelPoint[above right](v_{1,\j}){3}
        }

        \foreach \i in {0,1,2}{
            \foreach \j in {-4,0,4}{
                \tkzDefShiftPoint[v_{\i,\j}](120*\i-90:0.6){u_{\i,\j}}
            }
            \draw [decorate,decoration={brace,mirror, amplitude=10pt},xshift=0pt,yshift=-100pt] (u_{\i,-4}) -- (u_{\i,4});
        }

        \tkzLabelPoint[below=8pt](u_{0,0}){$k$}
        \tkzLabelPoint[above right = 8pt](u_{1,0}){$k + 1$}
        \tkzLabelPoint[above left = 8pt](u_{2,0}){$k + 1$}

        \tkzDrawSegments(v_{1,-4},v_{2,-4} v_{0,-4},v_{2,-4} v_{0,-3},v_{2,-4} v_{0,-2},v_{2,-4} v_{0,2},v_{2,-4} v_{0,3},v_{2,-4} v_{0,4},v_{2,-4})
    \end{tikzpicture}
    \caption{The graph $(\Gamma_{k + 1}, \omega)$ -- the grey ellipses are the three parts.}\label{fig:weighting}
\end{figure}

With the weighting $\omega$ shown in \cref{fig:weighting} the total number of vertices in (i.e.\ the total weight of) each part is $3k$. Furthermore, every vertex has degree exactly $3k + 1$ except for the two vertices of weight 1: the neighbourhood of the left vertex $v_{\text{left}}$ of weight 1 is the top right part and the neighbourhood of the right vertex $v_{\text{right}}$ of weight 1 is the top left part. We are ready to prove \cref{lem:Andrasfai}.

\begin{proof}[Proof of \cref{lem:Andrasfai}]
    Let $\sigma$ be a positive integer. Consider the tripartite graph $G'_{k, n} \coloneqq (\Gamma_{k + 1}, \omega_\sigma)$ where $\omega_\sigma(v) = \sigma \cdot \omega(v)$. Each part contains $n \coloneqq 3k\sigma$ vertices and, since $\Gamma_{k + 1}$ is triangle-free, $G'_{k, n}$ is triangle-free. Each vertex has degree exactly $(3k + 1) \sigma = n + \sigma$ except for the vertices in the independent sets $I_{v_{\text{left}}}$ and $I_{v_{\text{right}}}$ corresponding to $v_{\text{left}}$ and $v_{\text{right}}$. Each of these independent sets has size $\sigma$ (since $\omega_{\sigma}(v_{\text{left}}) = \omega_{\sigma}(v_{\text{right}}) = \sigma$) and the neighbourhood of every vertex in $I_{v_{\text{left}}}$ is the top right part and the neighbourhood of every vertex in $I_{v_{\text{right}}}$ is the top left part. Thus the graph $G'_{k, n} \coloneqq (\Gamma_{k + 1}, \omega_\sigma)$ satisfies \cref{property:1,property:2,property:3,property:4} with $U_1 = I_{v_{\text{left}}}$ and $U_2 = I_{v_{\text{right}}}$.

    We are left to show that, for $k \neq k'$, at least $\Omega(n^2)$ edges of $G'_{k, n}$ need to be deleted to obtain a subgraph of $G'_{k', n}$. First suppose that $k < k'$. Now, the number of edges of $G_{k, n}$ is
    \begin{equation*}
        \tfrac{1}{2}[(3n - 2\sigma)(n + \sigma) + 2\sigma n] = \tfrac{1}{2}[3n^2 + 3\sigma n - 2 \sigma^2] = \tfrac{n^2}{2} [3 + \tfrac{1}{k} - \tfrac{2}{9 k^2}],
    \end{equation*}
    and so $e(G'_{k, n}) - e(G'_{k', n}) = \Omega(n^2)$. Next suppose that $k > k'$. We first show that $G'_{k', n}$ does not contain $\Gamma_{k + 1}$. The graph $\Gamma_{k + 1}$ is an edge-maximal triangle-free graph in which all neighbourhoods are distinct. The graph $\Gamma_{k' + 1}$ is triangle-free and $G'_{k', n}$ is a weighted version of $\Gamma_{k' + 1}$. Since $k > k'$, $\Gamma_{k + 1}$ is not a subgraph of $\Gamma_{k' + 1}$ and so \cref{lem:homomorphism} implies that $G'_{k', n}$ does not contain $\Gamma_{k + 1}$.
    
    On the other hand, the graph $G'_{k, n}$ is a weighted version of $\Gamma_{k + 1}$. A \defn{canonical copy} of $\Gamma_{k + 1}$ in $G'_{k, n}$ is a copy of $\Gamma_{k + 1}$ in which each vertex is in the corresponding part of $G'_{k, n}$. The number of such canonical copies is $(3 \sigma)^k \cdot (3 \sigma)^{k - 1} (2 \sigma) \sigma \cdot (3\sigma)^{k - 1} (2 \sigma) \sigma = 3^{3k - 2} \cdot 2^2 \cdot \sigma^{3k + 2}$. Each edge of $G'_{k, n}$ is in at most $(3 \sigma)^k \cdot (3 \sigma)^{k - 1} (2 \sigma) \cdot (3 \sigma)^{k - 1} (2 \sigma) = 3^{3k - 2} \cdot 2^2 \cdot \sigma^{3k}$ canonical copies of $\Gamma_{k + 1}$. Thus, to remove all copies of $\Gamma_{k + 1}$ from $G'_{k, n}$ (which is necessary to obtain a subgraph of $G'_{k', n}$) requires the deletion of at least $\sigma^2 = \Omega(n^2)$ edges, as required.
\end{proof}

\section{Counting tools}\label{sec:counting_tools}

In this section we collect together some counting lemmas that guarantee the presence of $K_{t, t}$ and $K_{t, t, t}$. Throughout the paper we use the following standard notation. \defn{$N_G(v_1, \dots, v_t)$} denotes the common neighbourhood of a collection of vertices $v_1, \dots, v_t$ in graph $G$. For a pair of vertices $a$ and $b$, \defn{$\deg_G(a, b)$} $\coloneqq \abs{N_G(a, b)}$ denotes the number of common neighbours of $a$ and $b$. For a pair of disjoint sets $A, B \subset V(G)$, \defn{$e_G(A, B)$} $\coloneqq \abs{E(G[A, B])}$ denotes the number of edges of $G$ with one end-vertex in $A$ and the other in $B$. Finally, for a vertex $a$ and vertex-set $B$, \defn{$e_G(a, B)$} $\coloneqq e_G(\set{a}, B)$ denotes the number of neighbours of $a$ in $B$. We omit the subscript if $G$ is clear from context.

We will frequently use the unbalanced version of the K\H{o}v\'{a}ri-S\'{o}s-Tur\'{a}n theorem~\cite{KST54}.

\begin{thm}\label{thm:KST}
    For every positive integer $t$ there is a constant $K$ such that any $m \times n$ bipartite graph with at least $K(mn^{1 - 1/t} + n)$ edges contains $K_{t, t}$. In particular, if $m \geq n^{1/t}$, then $2Kmn^{1-1/t}$ edges suffice to guarantee a $K_{t, t}$. \end{thm}

The following averaging argument converts many copies of $K_{1, 1, t}$ into a copy of $K_{t, t, t}$. Here and throughout the paper, when we write ``there are at least $N$ $K_{1, 1, t}$s in $A \times B \times C$'' we mean there are at least $N$ copies of $K_{1, 1, t}$ with one vertex in $A$, one in $B$, and $t$ in $C$.

\begin{lem}\label{lem:K11t}
    For every positive integer $t$ there is a constant $K \geq 1$ such that if $G = (A, B, C; E)$ is a tripartite graph with $\abs{A} \geq \abs{B}^{1/t}$ and there are at least $K \cdot \abs{A} \cdot \abs{B}^{1 - 1/t} \cdot \abs{C}^t$ $K_{1, 1, t}$s in $A \times B \times C$, then $G$ contains $K_{t, t, t}$.
\end{lem}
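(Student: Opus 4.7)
The natural strategy is to double count copies of $K_{1,1,t}$ by grouping according to the $t$-set they use in $C$, and then extract a $K_{t,t}$ in a suitable bipartite graph via the unbalanced K\H{o}v\'{a}ri--S\'{o}s--Tur\'{a}n theorem (\cref{thm:KST}). Precisely, for a set $T \subseteq C$ of size $t$, let $A_T \coloneqq \{a \in A : T \subseteq N(a)\}$ and $B_T \coloneqq \{b \in B : T \subseteq N(b)\}$. A copy of $K_{1,1,t}$ with parts $\{a\} \subseteq A$, $\{b\} \subseteq B$, $T \subseteq C$ is precisely an edge $ab$ of $G[A_T, B_T]$, so the number $N$ of $K_{1,1,t}$s in $A \times B \times C$ satisfies
\[
    N = \sum_{\substack{T \subseteq C \\ \abs{T} = t}} e(A_T, B_T).
\]

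Next I would observe the key implication: any $K_{t,t}$ inside $G[A_T, B_T]$ together with $T$ itself forms a $K_{t,t,t}$ in $G$, because every vertex of $A_T \cup B_T$ is joined to every vertex of $T$ by definition. So I argue by contrapositive: assume $G$ is $K_{t,t,t}$-free and aim to upper bound $N$. Let $K_0$ be the constant from \cref{thm:KST}. For each $T$, the bipartite graph $G[A_T, B_T]$ is $K_{t,t}$-free, hence
\[
    e(A_T, B_T) < K_0 \bigl(\abs{A_T} \abs{B_T}^{1 - 1/t} + \abs{B_T}\bigr) \leq K_0 \bigl(\abs{A} \abs{B}^{1 - 1/t} + \abs{B}\bigr).
\]
Summing over the at most $\abs{C}^t / t!$ choices of $T$ yields
\[
    N < \frac{K_0}{t!} \bigl(\abs{A}\abs{B}^{1 - 1/t} + \abs{B}\bigr) \abs{C}^t.
\]
Taking $K \coloneqq \max\{K_0/t!, 1\}$ makes this contradict the hypothesis $N \geq K (\abs{A}\abs{B}^{1 - 1/t} + \abs{B}) \abs{C}^t$, so $G$ must contain $K_{t,t,t}$ after all.

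There is no real obstacle here: the only technical point is to apply \cref{thm:KST} with the correct orientation so that the $\abs{B}$-dependent term (rather than an $\abs{A}$-dependent one) appears on the right-hand side, matching the asymmetric bound in the statement. This is automatic once we view $G[A_T, B_T]$ as an $\abs{A_T} \times \abs{B_T}$ bipartite graph and use the monotonicity $\abs{A_T} \leq \abs{A}$, $\abs{B_T} \leq \abs{B}$.
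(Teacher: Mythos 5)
Your proof is correct and is essentially the same argument as the paper's: both decompose the count of $K_{1,1,t}$s over $t$-subsets $T$ of $C$ and apply the unbalanced K\H{o}v\'{a}ri--S\'{o}s--Tur\'{a}n theorem (\cref{thm:KST}) to the common neighbourhood of $T$, the only difference being that the paper averages to pick one good $T$ while you argue by contrapositive and sum over all $T$. No gaps; the constant bookkeeping ($K \geq K_0/t!$) matches the paper's $K \cdot t!$ threshold.
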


\begin{proof}
    By averaging over $t$-tuples of distinct vertices of $C$ there exist $c_1, \dotsc, c_t \in C$ such that $G[N(c_1, \dotsc, c_t)]$ contains at least $K \cdot t! \cdot \abs{A} \cdot \abs{B}^{1 - 1/t}$ edges. By \cref{thm:KST}, provided $K$ is sufficiently large in terms of $t$, the edges of $G[N(c_1, \dotsc, c_t)]$ contain a copy of $K_{t, t}$. Together with $c_1, \dotsc, c_t$ this gives a copy of $K_{t, t, t}$ in $G$.
\end{proof}

We will frequently apply \cref{lem:K11t} in the case where every edge between $A$ and $B$ has a linear-sized common neighbourhood in $C$. We package this into the following corollary.

\begin{cor}\label{cor:K11tdense}
    For every positive integer $t$ and constant $\lambda > 0$ there is a constant $K$ such that the following holds. Let $G = (A, B, C; E)$ be a tripartite graph where
    \begin{itemize}
        \item $\deg(a, b) \geq \max\set{\lambda \abs{C}, t}$ for every edge $ab \in E(G[A, B])$,
        \item $\abs{A} \geq \abs{B}^{1/t}$
        \item $e(G[A, B]) \geq K \cdot \abs{A} \cdot \abs{B}^{1 - 1/t}$.
    \end{itemize}
    Then $G$ contains a $K_{t, t, t}$.
\end{cor}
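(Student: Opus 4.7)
The plan is to deduce this directly from \cref{lem:K11t} by counting copies of $K_{1, 1, t}$ in $A \times B \times C$ via the edges of $G[A, B]$.

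First, I would fix an edge $ab \in E(G[A, B])$ and observe that the number of copies of $K_{1, 1, t}$ using $a$, $b$, and $t$ distinct common neighbours in $C$ is exactly $\binom{\deg(a, b)}{t}$. Since $\deg(a, b) \geq t$, this binomial coefficient is well-defined, and since the factors in $\binom{x}{t} = \prod_{i = 0}^{t - 1} \frac{x - i}{t - i}$ each exceed $x/t$ when $x \geq t$, we have $\binom{\deg(a, b)}{t} \geq (\deg(a, b)/t)^t$.

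Next, I would do a short case split according to the size of $\abs{C}$ to convert this into a bound in terms of $\abs{C}^t$. If $\abs{C} \geq t/\lambda$, then $\deg(a, b) \geq \lambda \abs{C}$ gives $\binom{\deg(a, b)}{t} \geq (\lambda/t)^t \abs{C}^t$. If instead $\abs{C} < t/\lambda$, then $(\lambda/t)^t \abs{C}^t < 1 \leq \binom{\deg(a, b)}{t}$. In either regime, every edge of $G[A, B]$ contributes at least $(\lambda/t)^t \abs{C}^t$ copies of $K_{1, 1, t}$, so the total number of such copies is at least
\[
    e(G[A, B]) \cdot (\lambda/t)^t \abs{C}^t \geq K (\lambda/t)^t (\abs{A} \cdot \abs{B}^{1 - 1/t} + \abs{B}) \cdot \abs{C}^t.
\]

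Finally, choosing $K \geq (t/\lambda)^t K_0$, where $K_0$ is the constant supplied by \cref{lem:K11t}, the number of $K_{1, 1, t}$s meets the hypothesis of that lemma and we obtain a $K_{t, t, t}$. There is no real obstacle here beyond the bookkeeping in the case split for small $\abs{C}$; the content is entirely in \cref{lem:K11t} (and hence \cref{thm:KST}).
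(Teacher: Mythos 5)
Your proof is correct and is essentially identical to the paper's: both count $K_{1,1,t}$s edge-by-edge via $\binom{\deg(a,b)}{t} \geq (\lambda/t)^t\abs{C}^t$ (with the same small-$\abs{C}$ case handled the same way) and then invoke \cref{lem:K11t}, with the constant $(t/\lambda)^t$ absorbed into $K$. No differences worth noting.
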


\begin{proof}
    The number of $K_{1, 1, t}$s in $A \times B \times C$ is
    \begin{equation*}
        \sum_{ab \in E(G[A, B])} \binom{\deg(a, b)}{t} \geq e(G[A, B]) \cdot \binom{\max\set{\lambda \abs{C}, t}}{t} \geq e(G[A, B]) \cdot (\lambda/t)^t \cdot \abs{C}^t,
    \end{equation*}
    where the last inequality is immediate if $t \geq \lambda \abs{C}$ and follows from $\binom{x}{t} \geq (x/t)^t$ for all $x \geq t$ otherwise. The result follows from \cref{lem:K11t}.
\end{proof}

We finish with a version of the dependent random choice lemma.

\begin{lem}\label{lem:coneigh_density}
    Let $G = (A, B; E)$ be a bipartite graph with edge density $\lambda$. If $\lambda \abs{A} \geq t$, then there are distinct vertices $a_1, \dotsc, a_t \in A$ and $B' \subset N(a_1, \dotsc, a_t)$ such that 
    \begin{itemize}[noitemsep]
        \item $\abs{B'} \geq \frac{1}{2} (\lambda/e)^t \abs{B}$ and
        \item every vertex in $B'$ has at least $\frac{1}{2} \lambda \abs{A}$ neighbours in $A$.
    \end{itemize}
\end{lem}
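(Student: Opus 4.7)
My plan is a dependent-random-choice calculation on a carefully chosen high-degree subset of $A$. I will set $A^{\ast} \coloneqq \set{a \in A : \deg(a) \geq \max(t, \lambda\abs{B}/2)}$, so that every vertex of $A^{\ast}$ automatically satisfies the required degree condition and also has degree at least $t$ (this second fact will be needed below). The task then reduces to finding distinct $b_1, \dotsc, b_t \in B$ with $\abs{N(b_1, \dotsc, b_t) \cap A^{\ast}} \geq \tfrac{1}{2}(\lambda/e)^t \abs{A}$; the set $A' \coloneqq N(b_1, \dotsc, b_t) \cap A^{\ast}$ will then satisfy both claimed properties.

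To produce such $b_i$'s, I will sample $T$ uniformly at random from $\binom{B}{t}$ and lower bound $\bE\bigl[\abs{N(T) \cap A^{\ast}}\bigr]$ in two steps. Writing $p_a = \deg(a)/\abs{B}$, the elementary bounds $\binom{d}{t} \geq (d/t)^t$ for $d \geq t$ and $\binom{\abs{B}}{t} \leq \abs{B}^t/t! \leq (e\abs{B}/t)^t$ (the latter from $t! \geq (t/e)^t$) give $\bP[a \in N(T)] \geq (p_a/e)^t$ for every $a \in A^{\ast}$, so
\begin{equation*}
    \bE\bigl[\abs{N(T) \cap A^{\ast}}\bigr] \geq e^{-t} \sum_{a \in A^{\ast}} p_a^t.
\end{equation*}
Next, Jensen's inequality applied to the convex function $x \mapsto x^t$ yields $\sum_{a \in A} p_a^t \geq \abs{A} \lambda^t$ (since the $p_a$'s have average $\lambda$), while every $a \notin A^{\ast}$ satisfies $p_a < \max(t/\abs{B}, \lambda/2)$. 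A short case analysis using the hypothesis $\lambda\abs{B} \geq t$ (and the integrality of the $\deg(a)$ in the regime $\lambda\abs{B} < 2t$) will show that $\sum_{a \notin A^{\ast}} p_a^t \leq \tfrac{1}{2} \abs{A} \lambda^t$ in both dominant regimes, hence $\sum_{a \in A^{\ast}} p_a^t \geq \tfrac{1}{2}\abs{A}\lambda^t$ and the above expectation is at least $\tfrac{1}{2}(\lambda/e)^t\abs{A}$. Some specific realisation of $T$ then achieves this bound.

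The only delicate point is the choice of threshold defining $A^{\ast}$: it must simultaneously enforce $\deg(a) \geq \lambda\abs{B}/2$ (needed for the conclusion) and $\deg(a) \geq t$ (so that the bound $\binom{d}{t} \geq (d/t)^t$ is available), and the two regimes $\lambda\abs{B} \leq 2t$ and $\lambda\abs{B} > 2t$ need to be verified separately when bounding the contribution of low-degree vertices. With the threshold set up this way the proof reduces to a routine combination of Jensen's inequality with the standard binomial estimates, and no iterative refinement or dependent deletion is required.
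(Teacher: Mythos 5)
Your proposal is correct and is essentially the paper's argument: both pick $t$ uniformly random vertices of $B$ and lower-bound, via Jensen-type convexity and the standard estimates $(x/t)^t \leq \binom{x}{t} \leq (ex/t)^t$, the expected number of common neighbours that also have degree at least $\tfrac{1}{2}\lambda\abs{B}$, then fix a realisation achieving the mean. The only difference is bookkeeping: the paper subtracts the expected number of low-degree (``bad'') common neighbours using $\binom{\frac{1}{2}\lambda\abs{B}}{t} \leq \tfrac{1}{2}\binom{\lambda\abs{B}}{t}$, whereas you restrict to the high-degree set $A^{\ast}$ with threshold $\max(t, \lambda\abs{B}/2)$ up front and split into the regimes $\lambda\abs{B} \geq 2t$ and $t \leq \lambda\abs{B} < 2t$; both versions go through.
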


\begin{proof}
    Call a vertex $b \in B$ \defn{bad} if it has fewer than $\frac{1}{2} \lambda \abs{A}$ neighbours in $A$. Pick $t$ distinct vertices $a_1, \dotsc, a_t \in A$ uniformly at random. Let $X$ be the size of $N(a_1, \dotsc, a_t)$ and $Y$ be the number of bad vertices in $N(a_1, \dotsc, a_t)$. Using the convexity of $x \mapsto \binom{x}{t}$,
    \begin{align*}
        \bE(X) & = \sum_{b \in B} \bP(a_1, \dotsc, a_t \in N(b))
        = \sum_{b \in B} \binom{\deg(b)}{t} \binom{\abs{A}}{t}^{-1} \\
        & \geq \binom{\abs{A}}{t}^{-1} \abs{B} \cdot \binom{\abs{B}^{-1} \sum_{b \in B} \deg(b)}{t} = \binom{\abs{A}}{t}^{-1} \abs{B} \binom{\lambda \abs{A}}{t}.
    \end{align*}
    On the other hand,
    \begin{equation*}
        \bE(Y) = \sum_{\text{bad } b} \binom{\deg(b)}{t} \binom{\abs{A}}{t}^{-1} \leq \abs{B} \binom{\frac{1}{2} \lambda \abs{A}}{t} \binom{\abs{A}}{t}^{-1},
    \end{equation*}
    and so, since $\binom{\frac{1}{2} \lambda \abs{A}}{t} \leq \frac{1}{2} \binom{\lambda \abs{A}}{t}$,
    \begin{equation*}
        \bE(X - Y) \geq \abs{B} \binom{\abs{A}}{t}^{-1} \biggl[\binom{\lambda \abs{A}}{t} - \binom{\frac{1}{2} \lambda \abs{A}}{t}\biggr] \geq \frac{\abs{B}}{2} \binom{\lambda \abs{A}}{t} \binom{\abs{A}}{t}^{-1} \geq \frac{\abs{B}}{2} \biggl(\frac{\lambda}{e}\biggr)^t,
    \end{equation*}
    where the last inequality used the bounds $(x/t)^t \leq \binom{x}{t} \leq (ex/t)^t$ for all $x \geq t$.
    Pick distinct $a_1, \dotsc, a_t \in A$ for which $X - Y \geq \frac{1}{2} (\lambda/e)^t \abs{B}$. Then take $B'$ to be the set of good vertices in $N(a_1, \dotsc, a_t)$. This has size $X - Y$, as required.
\end{proof}

\section{Degrees, boosters, and some structural lemmas}\label{sec:boosters}

We now come to the proof of \cref{thm:main}. Recall that we start with an $n \times n \times n$ tripartite graph $G$ which has minimum degree at least $n + \tau$ where $\tau = K n^{1 - 1/t}$ for some large constant $K$. We label the three parts of $G$ as $V_1, V_2, V_3$. This invokes a natural direction of edges in $G$: for $u \in V_i$, $v \in V_{i + 1}$ (such indices will always be modulo 3) we say that $uv \in E(G)$ is a \defn{forward edge} while $vu \in E(G)$ is a \defn{backward edge}.

The \defn{forward neighbourhood} of a vertex $v$, denoted $N^+(v)$, is the set of $u$ such that $vu$ is a forward edge. Note that if $u \in V_i$, then $N^+(v) \subset V_{i + 1}$. The size of $N^+(v)$ is the \defn{forward degree} of $v$ and is denoted $\deg^+(v)$. The \defn{backward neighbourhood} and \defn{backward degree} are defined similarly. Note that $\deg(v) = \deg^+(v) + \deg^-(v)$ for each vertex $v$.


We now define certain edges that are particularly important in the proof as they are useful for creating $K_{t, t, t}$s.

\begin{defi}[boosters]\label{def:booster}
    Let $e$ be an edge. Let the end-vertices of $e$ be $u$ and $v$ where $uv$ is in the forward direction.
    \begin{itemize}[noitemsep]
        \item $e$ is a \defn{booster} if $\deg^+(v) \geq \deg^+(u)$,
        \item $e$ is an \defn{$r$-booster} if $\deg^+(v) \geq \deg^+(u) + r$.
    \end{itemize}
\end{defi}

Note that boosters are exactly 0-boosters. Boosters are very useful as they give a lot of information about the neighbourhoods of their end-points.

\begin{lem}\label{lem:booster}
    Let $G$ be an $n \times n \times n$ tripartite graph with $\delta(G) \geq n + \tau$. Let $uv$ be an $r$-booster and $V_i$ be the part containing neither $u$ nor $v$. Then
    \begin{itemize}
        \item $\deg(u, v) \geq \tau + r$,
        \item $\abs{V_i \setminus (N(u) \cup N(v))} \leq \deg(u, v) - \tau - r$.
    \end{itemize}
\end{lem}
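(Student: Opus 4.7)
My plan is to reduce both bullets to a single inclusion-exclusion computation inside the third part $V_i$. Since $u$ and $v$ lie in consecutive parts and are adjacent, any common neighbour of theirs lies in $V_i$, so $\deg(u, v) = \abs{N(u) \cap N(v) \cap V_i}$. Moreover, $N(u) \cap V_i$ is precisely the backward neighbourhood $N^-(u)$ and $N(v) \cap V_i$ is precisely the forward neighbourhood $N^+(v)$; by the definitions of $f^-$ and $f^+$ these two sets have sizes at least $f^-(u)$ and $f^+(v)$ respectively.

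Next I would exploit the booster hypothesis. The $r$-booster condition gives $f^-(u) \geq f^-(v) + r$, and combining this with the identity $f^-(v) + f^+(v) = n + \tau$ produces the key estimate $f^-(u) + f^+(v) \geq n + \tau + r$. Using $\abs{(N(u) \cup N(v)) \cap V_i} \leq \abs{V_i} = n$, inclusion-exclusion inside $V_i$ yields
\[\deg(u, v) = \abs{N(u) \cap V_i} + \abs{N(v) \cap V_i} - \abs{(N(u) \cup N(v)) \cap V_i} \geq f^-(u) + f^+(v) - n \geq \tau + r,\]
which is the first bullet. Rearranging the same identity,
\[\abs{V_i \setminus (N(u) \cup N(v))} = n - \abs{N(u) \cap V_i} - \abs{N(v) \cap V_i} + \deg(u, v) \leq \deg(u, v) - (\tau + r),\]
which gives the second bullet.

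I do not foresee a genuine obstacle here: once one observes that the only part in which $u$ and $v$ can share neighbours is $V_i$, and that the booster hypothesis forces their degree budgets into $V_i$ to total at least $n + \tau + r$, both conclusions are routine bookkeeping. The only point requiring a little care is keeping straight which of $N^+$, $N^-$ is playing which role depending on the cyclic index of the parts containing $u$ and $v$.
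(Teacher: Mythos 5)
Your proof is correct and follows essentially the same route as the paper's: inclusion--exclusion inside $V_i$, identifying $N(u)\cap V_i = N^-(u)$ and $N(v)\cap V_i = N^+(v)$, and using the booster condition together with $f^-(v)+f^+(v)=n+\tau$ to get $f^-(u)+f^+(v)\geq n+\tau+r$. The paper phrases the second bullet via the equivalent $f^+$-form of the booster condition, but the computation is the same.
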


\begin{proof}
    Without loss of generality $u \in V_{i + 1}$ and $v \in V_{i + 2}$ (indices modulo 3) and so $\deg^+(v) \geq \deg^+(u) + r$.
    The inclusion-exclusion principle gives
    \begin{align*}
        \deg(u, v) & = \abs{N(u, v) \cap V_i} \geq \abs{N(u) \cap V_i} + \abs{N(v) \cap V_i} - n \\
        & = \deg^+(v) + \deg^-(u) - n \\
        & = (\deg^+(v) - \deg^+(u)) + (\deg(u) - n) \\
        & \geq r + \tau,
    \end{align*}
    and
    \begin{align*}
        \abs{V_i \setminus (N(u) \cup N(v))} & = n - \abs{V_i \cap (N(u) \cup N(v))} \\
        & = n - \deg^-(u) - \deg^+(v) + \deg(u, v) \\
        & = (n - \deg(u)) + (\deg^+(u) - \deg^+(v)) + \deg(u, v) \\
        & \leq - \tau - r + \deg(u, v). \qedhere
    \end{align*}
\end{proof}

In this language it is easy to give a proof of Graver's result that $\tau = 1$ suffices to guarantee the presence of a triangle. Let $u$ be a vertex of $G$ with $\deg^+(u)$ minimal. Since $\delta(G) \geq n + 1$, $u$ has at least one forward neighbour $v$. Then, by minimality of $\deg^+(u)$, the edge $uv$ is a booster and so, by \cref{lem:booster}, $\deg(u, v) \geq \tau = 1$ which gives a triangle in $G$.

The following is a useful structural lemma that we shall frequently apply. For the reader it may be helpful to think of $H$ as being $G[A, B]$ -- there is only one instance where we need to apply the lemma to a strict subgraph of $G[A, B]$.

\begin{lem}\label{lem:structure}
    For every positive integer $t$ and constant $\lambda > 0$ there is a constant $K$ such that the following holds. Let $G$ be an $n \times n \times n$ tripartite graph with parts $V_1, V_2, V_3$ and $\delta(G) \geq n + \tau$. Let $A \subset V_1$, $B \subset V_2$, $C \subset V_3$, and $H \subset G[A, B]$ be such that the density of $H[A, B]$ is at least $\lambda$ and $\lambda \abs{A} \geq t$. Then either $G[A, B, C]$ contains $K_{t, t, t}$ or there exist $B^\ast \subset B$ and $C^\ast \subset C$ such that
    \begin{itemize}[noitemsep]
        \item $\abs{C^\ast} \leq t \cdot \max_{a \in A} \deg^+(a)$,
        \item $\abs{B^\ast} \geq \frac{1}{2}(\lambda/e)^t \abs{B} - \abs{C}^{1/t}$,
        \item for all $b \in B^\ast$\textup{:} $e(b, C^\ast) \geq e(b, C) - K \cdot \abs{C}^{1 - 1/t}$,
        \item the density of $H[A, B^\ast]$ is at least $\frac{1}{2} \lambda$.
    \end{itemize}
\end{lem}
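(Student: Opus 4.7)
The plan is to apply \cref{lem:coneigh_density} to $H$ to obtain $t$ vertices $b_1, \dotsc, b_t \in B$ with a large common neighbourhood $A' \subset N_H(b_1, \dotsc, b_t)$, and to define $C^\ast$ to be the set of $c \in C$ missing from at least one $N^+(b_i)$. The main design choice -- and the conceptual hurdle -- is this definition of $C^\ast$: it must simultaneously be small (as bullet~1 demands), \emph{and} its complement in $C$ must lie inside $\bigcap_i N^+(b_i)$. The former will come from the bound $\abs{V_3 \setminus N^+(b)} \leq f^-(b)$, while the latter is immediate once one takes the natural union. With $C^\ast$ so chosen, the rest becomes a KST-based density increment: if many $a \in A'$ have many neighbours in $C \setminus C^\ast$, then \cref{thm:KST} produces a $K_{t,t}$ between the atypical vertices of $A'$ and $C \setminus C^\ast$; since both sides of this $K_{t,t}$ are by construction adjacent to every $b_i$ in $G$, it joins with $\set{b_1, \dotsc, b_t}$ into a $K_{t,t,t}$.

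Concretely, I would first apply \cref{lem:coneigh_density} to $H$ (whose density is at least $\lambda$ and which satisfies $\lambda\abs{B} \geq t$) to obtain $b_1, \dotsc, b_t \in B$ and $A' \subset N_H(b_1, \dotsc, b_t)$ with $\abs{A'} \geq \tfrac{1}{2}(\lambda/e)^t \abs{A}$ such that every vertex of $A'$ has at least $\tfrac{1}{2}\lambda\abs{B}$ $H$-neighbours in $B$ (this last property secures the fourth bullet for any $A^\ast \subset A'$). I would then set $C^\ast \coloneqq \bigcup_{i=1}^t (C \setminus N^+(b_i))$, so that $C \setminus C^\ast = C \cap \bigcap_{i=1}^t N^+(b_i)$. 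Since $C \subset V_3$, each summand satisfies $\abs{C \setminus N^+(b_i)} \leq n - \deg^+(b_i) \leq n - f^+(b_i) = f^-(b_i) - \tau$, so a union bound gives the first bullet $\abs{C^\ast} \leq \sum_i f^-(b_i) \leq t \max_{b\in B} f^-(b)$.

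Finally, I would fix $K$ to be twice the constant of \cref{thm:KST}, define $A^{\text{bad}} \coloneqq \set{a \in A' : e(a, C \setminus C^\ast) > K \abs{C}^{1 - 1/t}}$, and set $A^\ast \coloneqq A' \setminus A^{\text{bad}}$. The third bullet is then immediate from $e(a, C^\ast) = e(a, C) - e(a, C \setminus C^\ast)$. If $\abs{A^{\text{bad}}} < \abs{C}^{1/t}$, then $\abs{A^\ast} \geq \abs{A'} - \abs{C}^{1/t}$ yields the second bullet and we are done. Otherwise the bipartite graph $G[A^{\text{bad}}, C \setminus C^\ast]$ has more than $K \abs{A^{\text{bad}}} \abs{C}^{1 - 1/t}$ edges, and combining $\abs{A^{\text{bad}}} \geq \abs{C}^{1/t} \geq \abs{C \setminus C^\ast}^{1/t}$ with $\abs{C \setminus C^\ast} \leq \abs{C}$ shows that this comfortably exceeds the threshold of \cref{thm:KST}. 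The resulting $K_{t,t}$ consists of vertices each adjacent in $G$ to all of $b_1, \dotsc, b_t$ -- by construction of $A'$ on one side and of $C \setminus C^\ast$ on the other -- so together with $\set{b_1, \dotsc, b_t}$ it produces a $K_{t,t,t}$ in $G[A, B, C]$.
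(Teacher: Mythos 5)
Your proposal is correct and follows essentially the same route as the paper: apply \cref{lem:coneigh_density} to $H$ to get $b_1, \dotsc, b_t$ and $A' \subset N_H(b_1,\dotsc,b_t)$, set $C^\ast = C \setminus N(b_1,\dotsc,b_t)$ (bounded via $f^-$ exactly as in the paper), and remove the vertices of $A'$ with many neighbours in $C \setminus C^\ast$, since if there are at least $\abs{C}^{1/t}$ of them the K\H{o}v\'{a}ri--S\'{o}s--Tur\'{a}n theorem yields a $K_{t,t}$ inside the common neighbourhood of the $b_i$ and hence a $K_{t,t,t}$. The only differences are cosmetic (strict versus weak thresholds for the bad vertices and the explicit choice $K = 2K_{\ref{thm:KST}}$), and your verification of the KST edge threshold is valid.
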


\begin{proof}
    Applying \cref{lem:coneigh_density} to $H[A, B]$ gives distinct vertices $a_1, \dotsc, a_t \in A$ and $B_1 \subset B \cap N_H(a_1, \dotsc, a_t)$ such that $\abs{B_1} \geq \frac{1}{2}(\lambda/e)^t \abs{B}$ and $e_H(b, A) \geq \frac{1}{2} \lambda \abs{A}$ for every $b \in B_1$.

    Let $C^\ast = C \setminus N(a_1, \dotsc, a_t)$. Note that $C^\ast \subset \bigcup_i (V_3 \setminus N(a_i))$ and so
    \begin{equation*}
        \abs{C^\ast} \leq \sum_i \abs{V_3 \setminus N(a_i)} = \sum_i (n - \deg^-(a_i)) \leq \sum_i \deg^+(a_i) \leq t \cdot \max_{a \in A} \deg^+(a).
    \end{equation*}
    Let $B_2$ be those vertices in $B_1$ with at least $K \cdot \abs{C}^{1 - 1/t}$ neighbours in $C \setminus C^\ast$. If $\abs{B_2} \geq \abs{C}^{1/t}$, then
    \begin{align*}
        e(B_2, C \setminus C^\ast) \geq K \abs{B_2} \cdot \abs{C}^{1 - 1/t} \geq K/2 \cdot (\abs{B_2} \cdot \abs{C}^{1 - 1/t} + \abs{C}).
    \end{align*}
    Provided $K$ is sufficiently large, \cref{thm:KST} gives a $K_{t, t}$ between $B_2 \subset N_H(a_1, \dotsc, a_t) \subset N(a_1, \dotsc, a_t)$ and $C \setminus C^\ast = C \cap N(a_1, \dotsc, a_t)$. Together with $a_1, \dotsc, a_t$ this gives a $K_{t, t, t}$ in $G[A, B, C]$. Thus $\abs{B_2} < \abs{C}^{1/t}$. Let $B^\ast = B_1 \setminus B_2$. Then $\abs{B^\ast} \geq \abs{B_1} - \abs{C}^{1/t} \geq \frac{1}{2}(\lambda/e)^t \abs{B} - \abs{C}^{1/t}$ and every vertex of $B^\ast$ has fewer than $K \cdot \abs{C}^{1 - 1/t}$ neighbours in $C \setminus C^\ast$. Finally, every $b \in B^\ast \subset B_1$ satisfies $e_H(b, A) \geq \frac{1}{2} \lambda \abs{A}$ and so the density of $H[A, B^\ast]$ is at least $\frac{1}{2} \lambda$.
\end{proof}

\begin{rmk}\label{rmk:structure}
    In most cases we will use \cref{lem:structure} in the form stated above. However, there is one instance where we will want to additionally recall that there were $t$ vertices $a_1, \dotsc, a_t \in A$ such that $B^\ast \subset N_H(a_1, \dotsc, a_t)$, $C^\ast = C \setminus N(a_1, \dotsc, a_t)$, and every vertex in $B^\ast$ has at least $\frac{1}{2} \lambda \abs{A}$ neighbours in $A$.
\end{rmk}

\section{Proof of \texorpdfstring{\cref{thm:main}}{Theorem 1.3}}\label{sec:main_thm}

Each booster $uv$ gives $\binom{\tau}{t}$ copies of $K_{1, 1, t}$ (where the two singleton vertices are $u$ and $v$). Thus, if there are sufficiently many booster edges, then \cref{lem:K11t} would give a copy of $K_{t, t, t}$. In general there are not enough booster edges to make this simple counting argument work and we will instead need to pin down the structure of the booster edges within $G$.

We will now outline the proof. We say an edge $uv$ is \defn{$r$-heavy} if $\deg(u, v) \geq r$. Note that booster edges are $\tau$-heavy and $r$-boosters are $(r + \tau)$-heavy by \cref{lem:booster}.
We start with an $n \times n \times n$ tripartite graph $G$ with minimum degree $n + \tau$.
We first show (\cref{lem:initialconfig}) that there is some $k = \Omega(\tau)$ such that one of the following two structures must be present in $G$:
\begin{enumerate}[label = (\roman{*})]
    \item \textbf{Dense subgraph of boosters:} a bipartite graph of booster edges with parts of size $\Omega(k)$ which has edge density $1/200$
    ;
    \item \textbf{Many heavy edges:} $\Omega(k)$ vertices each having forward degree $\cO(k)$ and each being incident to $\Omega(k)$ forward edges that are $\Omega(k)$-heavy.
\end{enumerate}

We then address cases (i) and (ii) in \cref{sec:densebooster,sec:heavy}, respectively.
We first show (\cref{lem:denseboosterwin}) that if case (i) occurs, then there is a copy of $K_{t, t, t}$ in $G$. We next show (\cref{lem:lotsheavywin}) that if case (ii) occurs, then there is a copy of $K_{t, t, t}$ in $G$. Finally, combining \cref{lem:initialconfig,lem:denseboosterwin,lem:lotsheavywin} yields \cref{thm:main}. The proofs of \cref{lem:denseboosterwin,lem:lotsheavywin} both use density increment, albeit in different ways.

\Cref{fig:initial-config} shows the two possible configurations arising from \cref{lem:initialconfig}. For (ii), since the vertices of $S$ have forward degree $\cO(k)$, they must have backward degree $n - \cO(k)$, as is displayed in the figure.

\begin{figure}[H]
    \centering
    \begin{subfigure}{0.45\textwidth}
        \centering
        \includegraphics[width = 6cm]{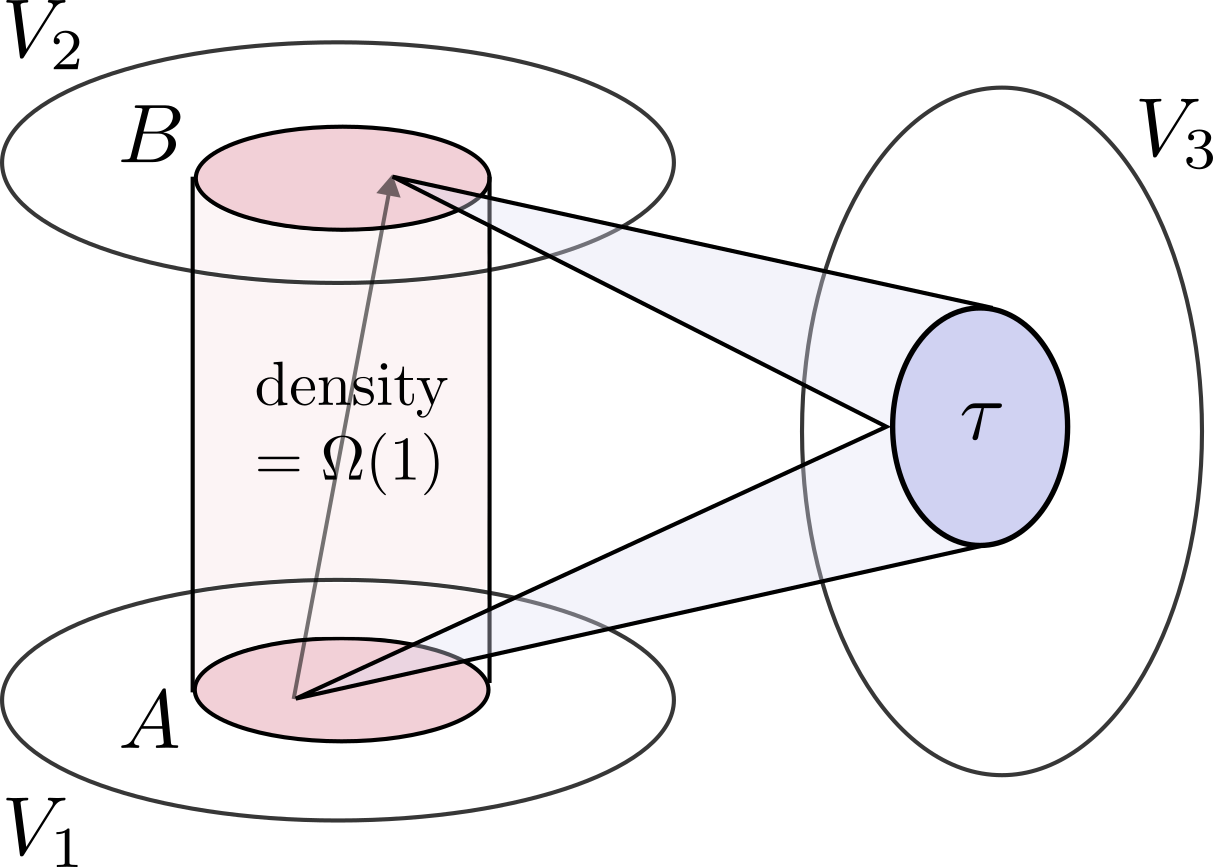}
        \subcaption{Dense subgraph of boosters}
    \end{subfigure}
    \begin{subfigure}{0.45\textwidth}
        \centering
        \includegraphics[width = 6cm]{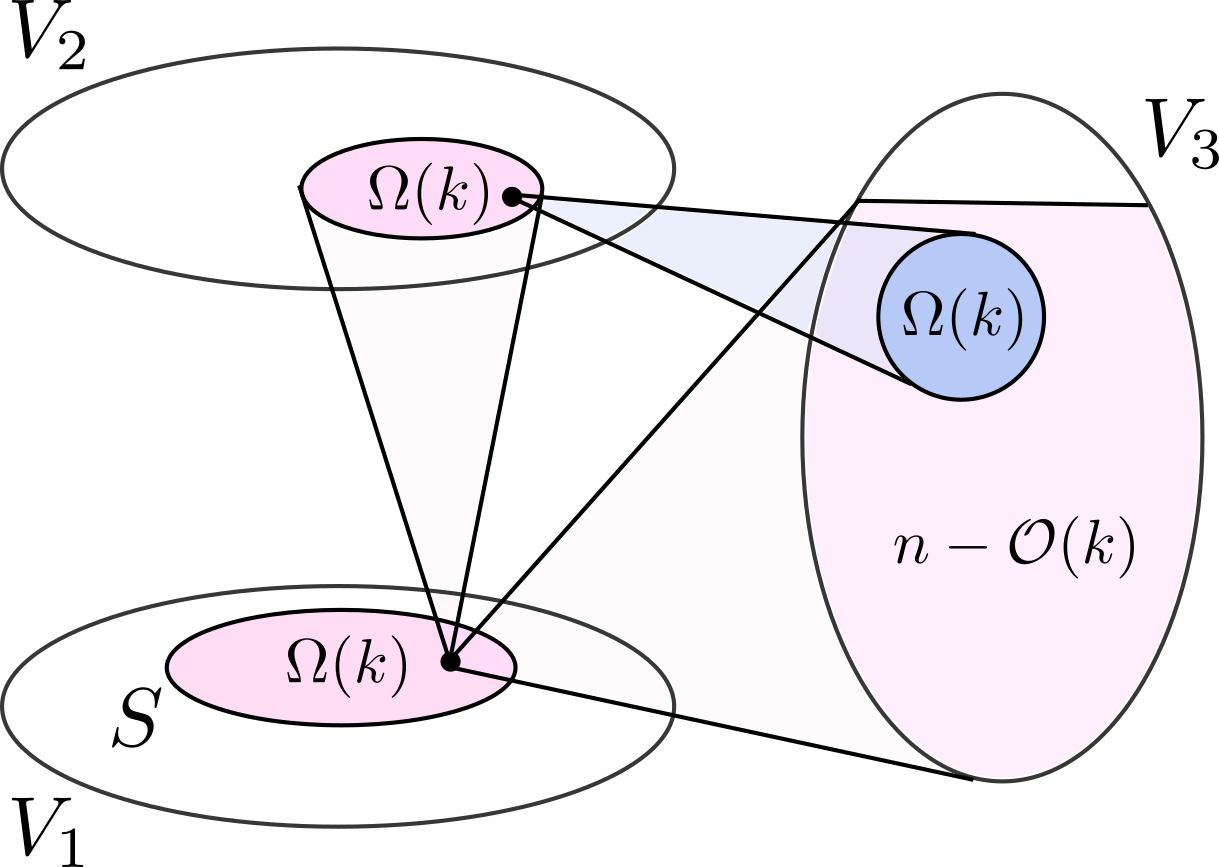}
        \subcaption{Many heavy edges}
    \end{subfigure}
    \caption{The configurations arising from \cref{lem:initialconfig}.}\label{fig:initial-config}
\end{figure}

\subsection{Finding the initial configuration}\label{sec:initialconfig}

Here we show that $G$ must contain one of two configurations shown in \cref{fig:initial-config}: either a dense subgraph of booster edges or many $\Omega(k)$-heavy edges. 
The constants appearing in \cref{lem:initialconfig} are not particularly important and we have not optimised them.
What matters is that the density of the bipartite graph of booster edges in (i) is $\Omega(1)$, the forward degrees in (ii) are $\cO(k)$, and all other quantities are $\Omega(k)$.
Throughout the proof we use the fact that if $uv$ is a forward edge and $\deg^+(v) \geq \deg^+(u) + r$, then $uv$ is an $r$-booster (and so is $r$-heavy).

\begin{lem}\label{lem:initialconfig}
    Let $G$ be an $n \times n \times n$ tripartite graph with $\delta(G) \geq n + \tau$. There is an integer $k \geq \tau/100$ and $i \in \set{1, 2, 3}$ such that $G$ contains either
    \begin{enumerate}[label=\textup{(}\roman{*}\textup{)}]
        \item a subgraph of booster edges \textup{(}in $G[V_i, V_{i + 1}]$\textup{)} with parts of size at least $k/200$ and density at least $1/200$, or
        \item $k/12$ vertices in $V_i$ each having forward degree at most $100k$ and each being incident to at least $k/200$ forward edges that are $k/100$-heavy.
    \end{enumerate}
\end{lem}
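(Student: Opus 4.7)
My plan is to choose $k$ via a dyadic pigeonhole on the distribution of $f^-$-values, and then case-split according to the backward-edge structure at the low-$f^-$ vertices. The ``low'' vertices will be the candidate hubs for case (ii) and their backward neighbourhoods will supply the potential dense booster subgraph for case (i).

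For $j \geq 0$ set $k_j = 2^j \tau/100$ and let $\phi_j = \max_i \abs{\set{v \in V_i : f^-(v) \leq 100 k_j}}$. The sequence $\phi_j$ is non-decreasing in $j$ and eventually reaches $n$, so there is a smallest $j$ with $\phi_j \geq k_j/12$. Set $k \coloneqq k_j \geq \tau/100$. By construction some part $V_i$ contains a set $W_i$ of at least $k/12$ vertices with $f^-$-value at most $100k$, while the minimality of $j$ forces every $V_{i'}$ to contain fewer than $k/24$ vertices of $f^-$-value at most $50k$. This is the arena for the case split.

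Each $v \in W_i$ has $\deg^-(v) \geq f^-(v) \geq \tau$ backward neighbours $u \in V_{i-1}$. Call such a backward edge a \emph{big jump} if $f^-(u) \geq f^-(v) + k/100$; then the forward edge $uv$ is a $k/100$-booster, and hence $k/100$-heavy by \cref{lem:booster}. If at least $k/12$ vertices $v \in W_i$ each have $\geq k/200$ big-jump backward edges, case (ii) follows directly (since $f^-(v) \leq 100k$). Otherwise, for a majority of $v \in W_i$ essentially all of the $\geq 50k$ backward edges satisfy $f^-(u) < f^-(v) + k/100 \leq 101k$. Among these, the boosters (those with $f^-(u) \geq f^-(v)$) lie in the bipartite booster subgraph between $W_i$ and $\set{u \in V_{i-1} : f^-(u) \leq 101k}$. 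A standard low-degree pruning (iteratively remove vertices incident to few remaining edges) in this bipartite subgraph should extract balanced parts of size $\geq k/200$ carrying density $\geq 1/200$, giving case (i).

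\textbf{Main obstacle.} The delicate step is realising case (i) in the second branch. A priori the $V_{i-1}$-side of the booster subgraph could be as large as $\phi_{j+1}$, which our dyadic choice only bounds loosely, so ``many edges on a small set'' does not immediately produce a balanced dense subgraph. Overcoming this requires either a further dyadic refinement on the $V_{i-1}$-side, or the observation that if the non-booster backward neighbours (those $u$ with $f^-(u) < f^-(v)$) are numerous then they themselves form a low-$f^-$ hub set and feed back into case (ii) at a smaller scale of $k$. The loose constants $1/200$ and $1/12$ are chosen to absorb the slack from the pruning step and from the factor lost when separating the big-jump, booster, and non-booster categories.
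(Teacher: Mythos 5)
There is a genuine gap, and you have located it yourself: the second branch, where the big-jump counting fails and you must extract either a dense booster subgraph (case (i)) or heavy edges (case (ii)), is exactly the hard part of the lemma, and your sketch does not carry it out. The booster edges you collect go from $W_i$ into $\set{u \in V_{i-1} \colon f^-(u) \leq 101k}$, and your dyadic choice of $j$ gives no upper bound on the size of this set (minimality only controls the count at scale $50k$; the count at scale $100k$ or $101k$ can be as large as $n$). With left degrees of order $k$, left side of order $k$, and a right side of unbounded size, no pruning can in general produce parts of size $k/200$ with density $1/200$: if, say, every right vertex had degree $1$, any $k/200 \times k/200$ subpair would have density $O(1/k)$. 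Ruling such configurations out requires additional structure from the minimum-degree hypothesis, and this is precisely what the paper's proof supplies and your sketch does not: the paper restricts attention to narrow $f^-$-bands $B \subset V_2$, $C \subset V_3$ just above the threshold, and in the key step replaces the unbounded right side by $V_3 \setminus N(b)$ for a single low-$f^-$ vertex $b$, whose size is at most $f^-(b) \leq 100k$, so that $\Omega(k)$ booster edges per left vertex immediately give constant density; the remaining subcase is closed by an inclusion--exclusion codegree computation showing every $B'$--$C''$ edge is $k/20$-heavy. Neither your ``further dyadic refinement'' nor the ``feed back into case (ii) at a smaller scale'' suggestion is developed, and the latter conflicts with the minimality of $j$ and with the fact that the lemma's conclusion concerns a single scale $k$.

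There are also smaller quantitative problems with your setup. Your set $W_i$ only carries the upper bound $f^-(v) \leq 100k$, so the claim that its vertices have ``$\geq 50k$ backward edges'' is unjustified; you can restore a lower bound by discarding the at most $k/24$ vertices per part with $f^-$-value at most $50k$ (using minimality, which moreover is unavailable when $j = 0$), but then only $k/24$ vertices survive, short of the $k/12$ demanded in case (ii). The paper avoids all of this by defining $k$ through the global ordering ($k = \min\set{i \colon f^-(v_i) \leq 98i}$), which simultaneously sandwiches the working vertices' $f^-$-values in $[49k, 98k]$ and guarantees that at most $k$ vertices in the whole graph lie below the threshold --- both facts are used repeatedly and have no analogue in your pigeonhole choice of $k$. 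As it stands the proposal establishes only the easy branch (many big jumps $\Rightarrow$ case (ii)) and leaves the core of \cref{lem:initialconfig} unproved.
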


\begin{proof}
    Order the vertices of $G$ as $v_1, \dotsc, v_{3n}$ where $\deg^+(v_1) \leq \dotsb \leq \deg^+(v_{3n})$.
    Let $k \coloneqq \min\set{i \in \bZ^+ \colon \deg^+(v_i) \leq 98i}$ (the set is non-empty since $\deg^+(v_{3n}) \leq n \leq 294 n$). 
    Since $n + \tau \leq \deg^-(v_k) + \deg^+(v_k) \leq \deg^+(v_k) + n$, we have $\tau \leq \deg^+(v_k) \leq 98k$ and so $k \geq \tau/100$, as promised. Finally, by minimality of $k$, $\deg^+(v_k) \geq \deg^+(v_{k - 1}) > 98(k - 1) \geq 95k$.
        
    Without loss of generality, at least one third of $v_{k/2}, \dotsc, v_k$ lie in $V_1$. Call this set $A$ so $\abs{A} \geq k/6$. By minimality of $k$, $\deg^+(v_{k/2}) \geq 98 \cdot k/2 = 49k$ and so every vertex of $A$ has forward degree between $49k$ and $98k$.

    Define the vertex-sets $B$ and $C$ as follows:
    \begin{align*}
        B & \coloneqq \set{v \in V_2 \colon \deg^+(v_k) \leq \deg^+(v) \leq \deg^+(v_k) + k/100}, \\
        C & \coloneqq \set{v \in V_3 \colon \deg^+(v_k) \leq \deg^+(v) \leq \deg^+(v_k) + k/50}.
    \end{align*}
    Note that every vertex of $A \cup B \cup C$ has forward degree at most $\deg^+(v_k) + k/50 < 100k$. This will ensure the forward degree condition of (ii) always holds. Also note, by definition, that every edge between $A$ and $B$ is a booster. Finally, every vertex of $A$ has at least $49k$ neighbours in $V_2$ and at most $k$ of these neighbours have forward degree at most $\deg^+(v_k)$ (by the vertex-ordering). Hence, every vertex of $A$ sends at least $48k$ booster edges to $V_2$.

    Suppose there is a set $X$ of $k/12$ vertices in $A$ such that every vertex of $X$ has at least $10k$ neighbours in $V_2 \setminus B$. Now, at most $k$ vertices have forward degree less than $\deg^+(v_k)$ and so every vertex of $X$ has $9k$ neighbours in $V_2 \setminus B$ whose forward degree is at least $\deg^+(v_k)$ and so is greater than $\deg^+(v_k) + k/100$ by the definition of $B$. All the resulting edges are $k/100$-boosters and so $X$ satisfies (ii). Similarly, if there is a set $Y$ of $k/12$ vertices in $B$ such that every vertex of $Y$ has at least $10k$ neighbours in $V_3 \setminus C$, then $Y$ satisfies (ii). Thus we may and will assume that there is a set $A' \subset A$ of at least $\abs{A} - k/12 \geq k/12$ vertices all of which have fewer than $10k$ neighbours in $V_2 \setminus B$ and a set of vertices $B' \subset B$ of size at least $\abs{B} - k/12$ all of which have fewer than $10k$ neighbours in $V_3 \setminus C$. Now, since every vertex of $A$ has at least $49k$ neighbours in $V_2$ and $\abs{B \setminus B'} \leq k/12$, every vertex of $A'$ has at least $49k - 10k - k/12 > 30k$ neighbours in $B'$. Also every vertex of $B'$ has at least $\deg^+(v_k) > 95k$ neighbours in $V_3$ and so at least $80k$ neighbours in $C$. In summary,
    
    \begin{itemize}
        \item $\abs{A'} \geq k/12$ and for all $a \in A'$: $e(a, B') \geq 30k$ and $49k \leq \deg^+(a) \leq 98k$,
        \item $\abs{B'} \geq 30k$ and for all $b \in B'$: $e(b, C) \geq 80k$ and $\deg^+(v_k) \leq \deg^+(b) \leq \deg^+(v_k) + k/100$,
        \item $\abs{C} \geq 80k$ and for all $c \in C$: $\deg^+(v_k) \leq \deg^+(c) \leq \deg^+(v_k) + k/50$,
        \item all edges between $A'$ and $B'$ are boosters,
        \item all vertices of $A' \cup B' \cup C$ have forward degree at most $100k$,
        \item every vertex of $A'$ sends at least $48k$ booster edges to $V_2$.
    \end{itemize}
    
    Suppose there are $40k$ vertices in $C$ each of which has at least $k/100$ neighbours in $A'$. 
    If each of the $40k$ vertices is incident to at least $k/200$ $k$-heavy edges to $A'$, then they satisfy (ii). Otherwise, one of the $40k$ vertices, $c$, say, is incident to at least $k/200$ non-$k$-heavy edges to $A'$. Then there is a set $A'' \subset A'$ of size $k/200$ such that $\deg(a, c) \leq k$ for all $a \in A''$. But then every vertex of $A''$ sends at least $47k$ booster edges to $V_2 \setminus N(c)$. On the other hand, since $c \in C$, $\abs{V_2 \setminus N(c)} = n - \deg^-(c) \leq \deg^+(c) \leq 100k$. Then $A''$ and $V_2 \setminus N(c)$ give (i) with edge density greater than $47/100$.
    
    Thus we may and will assume that there is a set $C' \subset C$ of at least $\abs{C} - 40k \geq 40k$ vertices each of which has at most $k/100$ neighbours in $A'$. Since $\abs{C \setminus C'} \leq 40k$, every vertex of $B'$ still has at least $80k - 40k = 40k$ neighbours in $C'$. In summary,
    
    \begin{itemize}
        \item $\abs{A'} \geq k/12$ and for all $a \in A'$: $e(a, C') \geq 30k$ and $49k \leq \deg^+(a) \leq 98k$,
        \item $\abs{B'} \geq 30k$ and for all $b \in B'$: $e(b, C') \geq 40k$ and $\deg^+(v_k) \leq \deg^+(c) \leq \deg^+(v_k) + k/100$,
        \item $\abs{C'} \geq 40k$ and for all $c \in C'$: $e(c, A') \leq k/100$ and $\deg^+(v_k) \leq \deg^+(c) \leq \deg^+(v_k) + k/50$,
        \item all edges between $A'$ and $B'$ are boosters,
        \item all vertices of $A' \cup B' \cup C'$ have forward degree at most $100k$.
    \end{itemize}
    
    If the density between $A'$ and $B'$ is at least $1/200$, then they give (i). Thus at least half the vertices of $B'$ have fewer than $\abs{A'}/100$ neighbours in $A'$. Call this set $B''$: $\abs{B''} \geq 15k$ and for all $b \in B''$, $e(b, A') \leq \abs{A'}/100 \leq k/100$.
    
    Consider any $b \in B''$, $c \in C'$. We have
    \begin{align*}
        e(b, V_1 \setminus A') & = \deg^-(b) - e(b, A') \geq n - \deg^+(b) - k/100 \geq n - \deg^+(v_k) - k/50, \\
        e(c, V_1 \setminus A') & = \deg^+(c) - e(c, A') \geq \deg^+(c) - k/100 \geq \deg^+(v_k) - k/100.
    \end{align*}
    On the other hand, $\abs{V_1 \setminus A'} = n - \abs{A'} \leq n - k/12$ and so, 
    \begin{align*}
        \deg(b, c) & \geq e(b, V_1 \setminus A') + e(c, V_1 \setminus A') - \abs{V_1 \setminus A'} \\
        & \geq (n - \deg^+(v_k) - k/50) + (\deg^+(v_k) - k/100) - (n - k/12) \\
        & = k/12 - 3k/100 \geq k/20,
    \end{align*}
    and so every edge between $B''$ and $C'$ is $k/20$-heavy. But $\abs{B''} \geq 15k$ and every vertex of $B'' \subset B'$ has at least $40k$ neighbours in $C'$ so $B''$ satisfies (ii).
\end{proof}

\subsection{Dense regime}\label{sec:densebooster}

In this section we show that if case (i) of \cref{lem:initialconfig} occurs, that is, $G$ contains a constant density subgraph $H = (A, B; E)$ of booster edges, then $G$ must contain $K_{t, t, t}$. Indeed, note that, if $\delta(G) \geq n + \tau$ where $\tau \geq Kn^{1-1/t}$ and $K/20000$ is larger than the constant $K_{\ref{lem:denseboosterwin}}$ in the lemma below, then case (i) of \cref{lem:initialconfig} satisfies its assumptions with $1/200$ playing the role of $\lambda$ (after relabelling the parts if necessary).
\begin{lem}\label{lem:denseboosterwin}
    For every $\lambda \in (0, 1]$ and integer $t \geq 2$ there is a constant $K$ such that the following holds. Suppose that $G$ is an $n \times n \times n$ tripartite graph with $\delta(G) = n + \tau$, where $\tau \geq K n^{1 - 1/t}$, and there are vertex-sets $A \subset V_1$, $B \subset V_2$, and a subgraph $H \subset G[A, B]$ such that
    \begin{itemize}
        \item $\abs{A}, \abs{B} \geq K n^{1 - 1/t}$,
        \item every edge of $H$ is a booster,
        \item $e(H) \geq \lambda \abs{A} \abs{B}$.
    \end{itemize}
    Then $G$ contains $K_{t, t, t}$.
\end{lem}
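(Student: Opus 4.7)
The plan is to iterate Proposition~\ref{prop:denseboosterincrement} on the fixed vertex sets $A, B$. Set $H_0 \coloneqq H$ and start with the trivial codegree bound $d_0 \coloneqq n$. At each step, either the proposition's ``$K_{t,t,t}$'' branch fires and we are done, or we obtain a nested booster subgraph $H_i \subset H_{i-1}$ of density at least $\lambda_i \geq c_t \lambda_{i-1}^{t+3}$, every edge of which has codegree at most
\[
d_i \coloneqq K_i \bigl( d_{i-1} \cdot \abs{B}^{-1/t^2} + n^{1-1/t} \bigr),
\]
where $K_i$ is the constant supplied by the proposition (depending on $\lambda_{i-1}$ and $t$). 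Since we will iterate only a constant number $N = N(t)$ of times and $\lambda_0 = \lambda$, the densities $\lambda_i$ stay above a positive constant $\lambda_\star = \lambda_\star(\lambda, t)$; correspondingly a single $K_\star = K_\star(\lambda, t)$ may replace every $K_i$. Enlarging the hypothesis constant $K$ keeps $\abs{A}, \abs{B} \geq K n^{1-1/t}$ throughout, so Proposition~\ref{prop:denseboosterincrement} is applicable at each step, and every $H_i$ remains a booster subgraph because $H_i \subset H$.

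The recursion $d_i \leq K_\star (d_{i-1} \abs{B}^{-1/t^2} + n^{1-1/t})$ decays geometrically. Writing $\alpha \coloneqq K_\star \abs{B}^{-1/t^2}$, the hypothesis $\abs{B} \geq K n^{1-1/t}$ forces $\alpha \leq 1/2$ once $K$ is large, so summing the geometric series gives
\[
d_N \;\leq\; \alpha^N n \;+\; \frac{K_\star}{1 - \alpha}\, n^{1-1/t} \;\leq\; \alpha^N n + 2 K_\star\, n^{1-1/t}.
\]
Using $\log(1/\alpha) \geq \tfrac{1}{t^2} \log \abs{B} - \log K_\star = \Omega_t(\log n)$, a direct computation shows that some constant $N = N(t)$ (of order $t^2/(t-1)$) suffices to make $\alpha^N n \leq n^{1-1/t}$, whence $d_N \leq C \cdot n^{1-1/t}$ for some $C = C(\lambda, t)$.

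However, every edge of $H_N$ is by construction a booster of $G$, so \cref{lem:booster} forces its codegree to be at least $\tau \geq K n^{1-1/t}$. Choosing the hypothesis constant $K > C(\lambda, t)$ then contradicts the upper bound $d_N$, so some iteration must have produced the desired $K_{t,t,t}$. The main hurdle is the bookkeeping: one must check that the values of $K_i$ and $\lambda_i$ across the $N(t)$ iterations can be absorbed into a single uniform $K_\star$ and $\lambda_\star$, and that the size conditions $\abs{A}, \abs{B} \geq K n^{1-1/t}$ needed to invoke Proposition~\ref{prop:denseboosterincrement} persist. Both follow easily because $N$ depends only on $t$, the density stays above a $(\lambda, t)$-only constant, and the vertex sets $A, B$ are never modified by the iterative sparsification.
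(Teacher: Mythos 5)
Your proposal is correct and follows essentially the same route as the paper: iterate \cref{prop:denseboosterincrement} starting from $d=n$ a bounded-in-$t$ number of times (the paper uses $2t$ steps with the cruder bound $\abs{B}^{-1/t^2}\leq n^{-1/(2t^2)}$, you sum the geometric recursion to get $N(t)\approx t^2/(t-1)$ steps), absorbing the finitely many density and constant parameters, and then contradict the booster codegree lower bound $\deg(a,b)\geq\tau\geq Kn^{1-1/t}$ from \cref{lem:booster}. The bookkeeping you flag is handled exactly as you describe, so no gap.
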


We will prove this using the following ``density increment'' result. It shows that if $G$ is $K_{t, t, t}$-free and every edge $ab$ of $H$ has codegree at most some bound $d$, then $H$ has a subgraph $H'$, still of constant density, in which all edges have codegree $\cO(d \cdot \abs{A}^{-1/t^2} + n^{1 - 1/t})$. This will be iterated until the codegree of all the edges in the subgraph is $\cO(n^{1 - 1/t})$. However, every edge of $H$ is a booster and so has codegree $\Omega(n^{1 - 1/t})$ by \cref{lem:booster}. Provided the constants are chosen correctly this gives a contradiction and so $G$ must contain $K_{t, t, t}$, which proves \cref{lem:denseboosterwin}.

\begin{prop}\label{prop:denseboosterincrement}
    For every $\lambda \in (0, 1]$ and integer $t \geq 2$ there is a constant $K$ such that the following holds. Let $G$ be an $n \times n \times n$ tripartite graph with $\delta(G) \geq n + \tau$ and suppose there is $d > 0$, vertex-sets $A \subset V_1$, $B \subset V_2$, and a subgraph $H \subset G[A, B]$ such that
    \begin{itemize}[noitemsep]
        \item $\abs{A}, \abs{B} \geq K n^{1 - 1/t}$,
        \item every edge of $H$ is a booster,
        \item the density of $H[A, B]$ is at least $\lambda$,
        \item $\deg(a, b) \leq d$ for every edge $ab$ of $H$.
    \end{itemize}
    Then either $G$ contains $K_{t, t, t}$ or there is a subgraph $H'$ of $H$ such that
    \begin{itemize}[noitemsep]
        \item the density of $H'[A, B]$ is at least $\lambda^{t + 3}/(1024 e^t)$,
        \item $\deg(a, b) \leq K(d \cdot \abs{A}^{-1/t^2} + n^{1 - 1/t})$ for every edge $ab$ of $H'$.
    \end{itemize}
\end{prop}

Let us briefly sketch the proof of \cref{prop:denseboosterincrement}. 
The key observation is that for any booster $ab \in H[A,B]$ with 
$\deg(a,b) \le d$, all but at most $2d$ vertices of $V_3$ are adjacent to
exactly one of $a$ and $b$ (by \cref{lem:booster}). Iterating this
observation shows that pairs of vertices connected by a short path in
$H$ have closely related neighbourhoods in $V_3$: vertices lying in
different parts of $H[A, B]$ have approximately complementary
neighbourhoods in $V_3$, whereas vertices in the same part have
neighbourhoods that are nearly identical. 

We make use of this observation in the following way. First, we select subsets $A^\ast \subseteq A$ and
$B^\ast \subseteq B$ such that the induced graph $H[A^\ast, B^\ast]$ has small
diameter while retaining positive edge density. Applying the observation to $H[A^\ast, B^\ast]$, we then find disjoint sets
$U_1, U_2 \subseteq V_3$ with the following properties: the bipartite
graphs $G[A^\ast, V_3 \setminus (U_1 \cup U_2)]$ and $G[B^\ast, U_1]$ are
both almost complete, and $\abs{U_2} = \cO(d)$.\footnote{For technical reasons, the proof further requires that $B^\ast \subseteq N(a_1, \dots, a_t)$ for some carefully selected $a_1, \dots,a_t \in A$, and that $V_3 \setminus N(a_1, \dots, a_t) = U_1 \cup U_2$, which translates to a negligible loss in density.}

Under these circumstances, if a large proportion of the edges in
$H[A^\ast, B^\ast]$ have most of their common neighbourhood contained in either
$U_1$ or $V_3 \setminus (U_1 \cup U_2)$, then the density of
$H[A^\ast, B^\ast]$ readily yields a copy of $K_{t, t, t}$. Consequently, a
substantial fraction of the edges of $H[A^\ast, B^\ast]$ must instead have
most of their common neighbourhood contained in $U_2$. If the common neighbourhood in $U_2$ of a substantial fraction of such edges has large density in $U_2$, then a careful counting argument gives a $K_{t, t, t}$. Otherwise, there is a reasonable fraction of edges of $H[A^\ast, B^\ast]$ whose common neighbourhood is mostly in $U_2$ and further is small compared to $U_2$, which gives the second outcome.

\begin{proof}[Proof of \cref{prop:denseboosterincrement}]
    Since every edge of $H$ is a booster, \cref{lem:booster} implies that every edge $ab$ of $H$ satisfies
    \begin{equation}\label{eq:codegd}
        \abs{V_3 \setminus (N(a) \cup N(b))} \leq \deg(a, b) \leq d.
    \end{equation}
    Applying \cref{rmk:structure} to $H$ with $C = V_3$ gives $t$ distinct vertices $a_1, \dotsc, a_t \in A$ and a set $B' \subset N_H(a_1, \dotsc, a_t)$ such that
    \begin{enumerate}[noitemsep,label=(\alph{*})]
        \item $\abs{B'} \geq \frac{1}{2}(\lambda/e)^t \cdot \abs{B} - n^{1/t}$,
        \item every $b \in B'$ has $e_H(b, A) \geq \frac{1}{2} \lambda \abs{A}$,
        \item every $b \in B'$ has at most $K_{\ref{lem:structure}} n^{1 - 1/t}$ neighbours in $N(a_1, \dotsc, a_t) \cap V_3$,
    \end{enumerate}
    where the constant $K_{\ref{lem:structure}}$ given by the lemma depends only on $t$ and $\lambda$. Since $n^{1 - 1/t} \geq n^{1/t}$, (a) implies $\abs{B'} \geq \frac{1}{4}(\lambda/e)^t \cdot \abs{B}$, provided $K$ is large enough. Also, by (b), $e(H[A, B']) \allowbreak \geq \lambda/2 \cdot \abs{A} \abs{B'}$.

    We now do the following sparsification procedure on $H[A, B']$. Repeatedly delete vertices of $B'$ with degree less than $\lambda/4 \cdot \abs{A}$ and vertices of $A$ with degree less than $\lambda/4 \cdot \abs{B'}$. In total this removes fewer than $\lambda/2 \cdot \abs{A} \abs{B'}$ edges from $H[A, B']$ and so some edges remain. Let the resulting vertex-sets be $A^\ast \subset A$ and $B^\ast \subset B'$. Note that every vertex of $A^\ast$ has at least $\lambda/4 \cdot \abs{B'}$ neighbours in $B^\ast$ and every vertex of $B^\ast$ has at least $\lambda/4 \cdot \abs{A}$ neighbours in $A^\ast$. In particular, $\abs{A^\ast} \geq \lambda/4 \cdot \abs{A}$ and $\abs{B^\ast} \geq \lambda/4 \cdot \abs{B'}$.

    We now partition $V_3$ into $N(a_1, \dotsc, a_t) \cap V_3$, $U_1$, and $U_2$ where
    \begin{align*}
        U_1 & \coloneqq \set{v \in V_3 \setminus N(a_1, \dotsc, a_t) \colon \abs{N(v) \cap B^\ast} \geq (1 - \lambda/8) \abs{B^\ast}}, \\
        U_2 & \coloneqq \set{v \in V_3 \setminus N(a_1, \dotsc, a_t) \colon \abs{N(v) \cap B^\ast} < (1 - \lambda/8) \abs{B^\ast}}.
    \end{align*}
    Fix $b \in B^\ast$. Note that
    \begin{equation*}
        V_3 \setminus (N(a_1, \dotsc, a_t) \cup N(b)) = \bigcup_{i = 1}^t (V_3 \setminus (N(a_i) \cup N(b)))
    \end{equation*}
    and so applying inequality~\eqref{eq:codegd} to the edges $a_i b$ shows that $b$ has at most $td$ non-neighbours in $V_3 \setminus N(a_1, \dotsc, a_t)$. In particular, the number of non-edges between $B^\ast$ and $U_2$ is at most $td \cdot \abs{B^\ast}$. However, each vertex of $U_2$ contributes $\lambda/8 \cdot \abs{B^\ast}$ to this tally and so $\abs{U_2} \leq 8 t d \lambda^{-1}$.
    
    Consider a pair $a \in A^\ast$ and $v \in U_1$. By the degree conditions on $H[A^\ast, B^\ast]$, $a$ has at least $\lambda/4 \cdot \abs{B'} \geq \lambda/4 \cdot \abs{B^\ast}$ neighbours in $B^\ast$. On the other hand, $v \in U_1$ has at most $\lambda/8 \cdot \abs{B^\ast}$ non-neighbours in $B^\ast$ and so $a$ and $v$ have at least $\lambda/8 \cdot \abs{B^\ast}$ common neighbours in $B^\ast$.

    Let $K_{\ref{cor:K11tdense}}$ be the constant obtained when applying \cref{cor:K11tdense} to $t$ and $\lambda/8$.
    If half the vertices of $A^\ast$ have at least $2 K_{\ref{cor:K11tdense}} n^{1 - 1/t}$ neighbours in $U_1$, then
    \begin{equation*}
        e(G[A^\ast, U_1]) \geq K_{\ref{cor:K11tdense}} n^{1 - 1/t} \abs{A^\ast}.
    \end{equation*}
    Also, provided $K$ is sufficiently large, $\abs{A^\ast} \geq n^{1 - 1/t} \geq n^{1/t}$ and $\lambda/8 \cdot \abs{B^\ast} \geq t$ and so \cref{cor:K11tdense} implies that $G[A^\ast, B^\ast, U_1]$ contains $K_{t, t, t}$.
    Hence, we may and will assume that at least half the vertices of $A^\ast$ have fewer than $2K_{\ref{cor:K11tdense}} n^{1 - 1/t}$ neighbours in $U_1$. Let $A^{\ast\ast}$ be the set of such vertices. By the minimum degree conditions on $H[A^\ast, B^\ast]$, the number of edges in $H[A^{\ast\ast}, B^{\ast}]$ is at least $\abs{A^{\ast\ast}} \cdot \lambda/4 \cdot \abs{B^\ast}$.

    Let $K_{\ref{lem:K11t}}$ be the constant obtained when applying \cref{lem:K11t} to $t$.
    Call an edge $ab$ of $H[A^{\ast\ast}, B^{\ast}]$ \defn{sparse} if $\abs{N(a, b) \cap U_2} \leq S \coloneqq \max \set{64 K_{\ref{lem:K11t}} t^2 d \lambda^{-2} (\lambda/8 \cdot \abs{A})^{-1/t^2}, t}$. Suppose that at least half the edges of $H[A^{\ast\ast}, B^{\ast}]$ are not sparse. Then the number of $K_{1, 1, t}$s in $A^{\ast\ast} \times B^\ast \times U_2$ is at least
    \begin{align*}
        \lambda/8 \cdot \abs{A^{\ast\ast}} \cdot \abs{B^{\ast}} \cdot \tbinom{S}{t} & \geq \lambda/8 \cdot \abs{A^{\ast\ast}} \cdot \abs{B^{\ast}} \cdot (S/t)^t \\
        & \geq \lambda/8 \cdot \abs{A^{\ast\ast}} \cdot \abs{B^{\ast}} \cdot (64 K_{\ref{lem:K11t}} t d \lambda^{-2} (\lambda/8 \cdot \abs{A})^{-1/t^2})^t \\
        & \geq \lambda/8 \cdot \abs{A^{\ast\ast}} \cdot \abs{B^{\ast}} \cdot (64 K_{\ref{lem:K11t}} t d \lambda^{-2} \abs{A^{\ast\ast}}^{-1/t^2})^t \\
        & \geq (8td \lambda^{-1})^t \cdot K_{\ref{lem:K11t}} \cdot \abs{B^{\ast}} \cdot \abs{A^{\ast\ast}}^{1 - 1/t} \\
        & \geq K_{\ref{lem:K11t}} \cdot \abs{B^{\ast}} \cdot \abs{A^{\ast\ast}}^{1 - 1/t} \cdot  \abs{U_2}^t,
    \end{align*}
    where the first inequality follows from $\binom{x}{t} \geq (x/t)^t$ for all $x \geq t$, the third inequality follows from $\abs{A^{\ast\ast}} \geq 1/2 \cdot \abs{A^\ast} \geq \lambda/8 \cdot \abs{A}$, and the last inequality uses the bound $\abs{U_2} \leq 8 td \lambda^{-1}$ proved above.
    Also, provided $K$ is sufficiently large, $\abs{B^\ast} \geq n^{1 - 1/t} \geq n^{1/t}$ and hence \cref{lem:K11t} implies that there is a $K_{t, t, t}$ in $G[A^{\ast\ast}, B^{\ast}, U_2]$. 
    Thus we may and will assume that at least half the edges of $H[A^{\ast\ast}, B^{\ast}]$ are sparse. Let $H'$ be the subgraph consisting of sparse edges. For every edge $ab$ of $H'$: 
    \begin{itemize}
        \item $a \in A^{\ast\ast}$ has fewer than $2K_{\ref{cor:K11tdense}} n^{1 - 1/t}$ neighbours in $U_1$,
        \item $b \in B^{\ast} \subset B'$ has at most $K_{\ref{lem:structure}} n^{1 - 1/t}$ neighbours in $N(a_1, \dotsc, a_t) \cap V_3$,
        \item $a$ and $b$ have at most $S$ common neighbours in $U_2$.
    \end{itemize}
    Overall, $a$ and $b$ have at most $(K_{\ref{lem:structure}} + 2K_{\ref{cor:K11tdense}}) n^{1 - 1/t} + S$ common neighbours. This is at most $K(d \cdot \abs{A}^{-1/t^2} + n^{1 - 1/t})$ for sufficiently large $K$. Finally,
    \begin{align*}
        e(H') & \geq 1/2 \cdot e(H[A^{\ast\ast}, B^{\ast}]) \geq \lambda/8 \cdot \abs{A^{\ast\ast}} \abs{B^{\ast}} \geq \lambda^3/256 \cdot \abs{A} \abs{B'} \\
        & \geq \lambda^{t + 3}/(1024 e^t) \cdot \abs{A} \abs{B},
    \end{align*}
    as required.
\end{proof}

We are now ready to show that if there is a dense subgraph of booster edges in $G$ (i.e.\ if case (i) of \cref{lem:initialconfig} occurs), then $G$ contains $K_{t, t, t}$.

\begin{proof}[Proof of \cref{lem:denseboosterwin}]
    Suppose for a contradiction that $G$ does not contain $K_{t, t, t}$. We will repeatedly apply \cref{prop:denseboosterincrement}: note that $\abs{B} \geq n^{1 - 1/t} \geq n^{1/2}$ and so $d \cdot \abs{B}^{-1/t^2} \leq d n^{-1/2t^2}$. Let $K_{\ref{prop:denseboosterincrement}}$ be the constant obtained in \cref{prop:denseboosterincrement} when it is applied to $\lambda$ and $t$.
    Provided $K \geq K_{\ref{prop:denseboosterincrement}}$, the graph $H$ and vertex-sets $A, B$ satisfy the hypotheses of \cref{prop:denseboosterincrement} with $d = n$. This gives a subgraph $H'$ of $H$ with edge density $e_\lambda = \Omega(\lambda^{t + 3})$ and such that, for every edge $ab$ of $H'$, $\deg(a, b) \leq K_{\ref{prop:denseboosterincrement}}(d n^{-1/2t^2} + n^{1 - 1/t}) \leq c_\lambda n^{1 - 1/2t^2}$ where $c_\lambda$ and $e_\lambda$ are positive constants depending on $\lambda$ and $t$. 

    Now let $K_{\ref{prop:denseboosterincrement}}$ be the constant obtained when \cref{prop:denseboosterincrement} is applied to $e_{\lambda}$ and $t$. Provided $K \geq K_{\ref{prop:denseboosterincrement}}$, the graph $H'$ and vertex-sets $A$, $B$ again satisfy the hypotheses of \cref{prop:denseboosterincrement} now with $d = c_\lambda n^{1 - 1/2t^2}$. This gives a subgraph $H''$ of $H$ with edge density $e'_{\lambda} = \Omega(e_\lambda^{t + 3})$ and such that, for every edge $ab$ of $H''$, $\deg(a, b) \leq K_{\ref{prop:denseboosterincrement}}(dn^{-1/2t^2} + n^{1 - 1/t}) \leq c'_{\lambda} n^{1 - 2/2t^2}$ where $c'_{\lambda}$ and $e'_{\lambda}$ are positive constants depending on $\lambda$ and $t$.
    
    We may repeat this argument $2t - 2$ more times to obtain, provided $K$ is sufficiently large in terms of $\lambda$ and $t$, a subgraph $H^\ast$ of $H$ with edge density at least $e^\ast_\lambda$ and where every edge $ab$ of $H^\ast$ satisfies $\deg(a, b) \leq c^\ast_\lambda n^{1 - 1/t}$.

    Now, every edge of $H^\ast$ is a booster (since every edge of $H$ is). This means that every edge $ab$ of $H^\ast$ satisfies $\deg(a, b) \geq \tau \geq Kn^{1 - 1/t}$. Therefore, if $K \geq c^{\ast}_{\lambda}$, then we obtain a contradiction. Thus, if $K$ is sufficiently large in terms of $\lambda$ and $t$, then $G$ must contain $K_{t, t, t}$.
\end{proof}

This concludes the argument when case (i) from \cref{lem:initialconfig} arises. While we are discussing the dense regime we will note one further dense configuration that guarantees a $K_{t, t, t}$. This will be useful for case (ii). Note that this lemma does not follow immediately from \cref{lem:denseboosterwin} as the edges are heavy rather than boosters.

\begin{lem}\label{lem:denseheavywin}
    For every $\lambda \in (0, 1]$ and integer $t \geq 2$ there is a constant $K$ such that the following holds. Suppose that $G$ is an $n \times n \times n$ tripartite graph with $\delta(G) \geq n + \tau$, there is an integer $r \geq Kn^{1 - 1/t}$, vertex-sets $A \subset V_1$, $B \subset V_2$, and a subgraph $H \subseteq G[A, B]$ such that
    \begin{itemize}[noitemsep]
        \item $\abs{A} \geq K$, $\abs{B} \geq Kn^{1 - 1/t}$,
        \item every edge of $H$ is $r$-heavy,
        \item the density of $H[A, B]$ is at least $\lambda$,
        \item Every vertex of $A$ has forward degree at most $10^4 r$.
    \end{itemize}
    Then $G$ contains $K_{t, t, t}$.
\end{lem}

\begin{proof}
    We first apply our structural lemma, \cref{lem:structure}, to $A$, $B$, and $C = V_3$ (note that $\lambda \abs{A} \geq t$ provided $K$ is large). Either $G$ contains $K_{t, t, t}$ or there exists $B^\ast \subset B$ and $C^\ast \subset V_3$ such that
    \begin{itemize}[noitemsep]
        \item $\abs{C^\ast} \leq 10^4 tr$,
        \item $\abs{B^\ast} \geq \frac{1}{2} (\lambda/e)^t \abs{B} - n^{1/t}$,
        \item for all $b \in B^\ast$\textup{:} $e(b, C^\ast) \geq e(b, V_3) - K_{\ref{lem:structure}} \cdot n^{1 - 1/t}$,
        \item the density of $H[A, B^\ast]$ is at least $\frac{1}{2} \lambda$,
    \end{itemize}
    where the constant $K_{\ref{lem:structure}}$ is given by the lemma and depends only on $\lambda$ and $t$. Provided $K$ is large enough, the second bullet point implies that $\abs{B^\ast} \geq n^{1/t} \geq \abs{A}^{1/t}$.
    
    Consider an edge $ab \in E(H[A, B^\ast])$. It is $r$-heavy and so the third bullet point implies that $a$ and $b$ have at least $r - K_{\ref{lem:structure}} \cdot n^{1 - 1/t} \ge r/2$ common neighbours in $C^\ast$, provided $K \geq 2K_{\ref{lem:structure}}$. Hence every edge of $H[A, B^\ast]$ has at least $r/2 \geq \max\set{\abs{C^\ast}/(2t \cdot 10^4), t}$ common neighbours in $C^\ast$. Note that
    \begin{align*}
        e(G[A, B^\ast]) \geq e(H[A, B^\ast]) & \geq \tfrac{1}{2} \lambda \cdot \abs{A} \cdot \abs{B^\ast} \\
        & \geq \tfrac{1}{2} \lambda \cdot K^{1/t} \cdot \abs{A}^{1 - 1/t} \cdot \abs{B^\ast}.
    \end{align*}
    Thus, provided $K$ is sufficiently large, \cref{cor:K11tdense} implies that $G$ contains $K_{t, t, t}$.
\end{proof}

\subsection{Lots of heavy edges}\label{sec:heavy}

In this section we show that if case (ii) of \cref{lem:initialconfig} occurs, that is, $G$ contains $\Omega(k)$ vertices each having forward degree $\cO(k)$ and each being incident to $\Omega(k)$ forward edges that are $\Omega(k)$-heavy, then $G$ must contain $K_{t, t, t}$. 

\begin{lem}\label{lem:lotsheavywin}
    For every integer $t \geq 2$ there is a constant $K$ such that the following holds. Let $G$ be an $n \times n \times n$ tripartite graph with $\delta(G) \geq n + \tau$ where $\tau \geq Kn^{1 - 1/t}$. If case \textup{(}ii\textup{)} of \cref{lem:initialconfig} occurs for some $k \geq \tau/100$, then $G$ contains a $K_{t, t, t}$.
\end{lem}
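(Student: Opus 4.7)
The plan is to assume for contradiction that $G$ contains no $K_{t,t,t}$ and derive a contradiction using the maximality of $r$. By \cref{lem:squad} an $r$-squad exists for some $r \geq 100k$; I take $r$ maximal, which is well-defined as $r \leq n$. Without loss of generality the squad $S$ lies in $V_2$, so $\abs{S} = k/800$, each $v \in S$ has $f^-(v) \leq r$, and $v$ is incident to at least $k/800$ backward $2tr$-booster edges. For each $v \in S$ let $T_v \subset V_1$ denote these booster end-points and set $T = \bigcup_{v \in S} T_v$. Let $H$ be the bipartite graph on $(T, S)$ of these booster edges, so $e(H) \geq \abs{S} \cdot k/800$. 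By \cref{lem:booster} every edge of $H$ is $(\tau + 2tr)$-heavy, and every $u \in T$ satisfies $f^-(u) \geq \tau + 2tr$, hence $\deg^-(u) \geq \tau + 2tr$.

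The core step will be to apply \cref{lem:structure} to $(A, B, C) = (T, S, V_3)$ with subgraph $H$. Provided $\abs{T} = \cO(k)$ -- so the density $\lambda \geq (k/800)/\abs{T}$ of $H[T, S]$ is a positive constant -- the hypotheses of \cref{lem:structure} hold once $K$ is large. The conclusion produces $b_1, \dotsc, b_t \in S$, a nonempty $A^* \subset T$, and $C^* \subset V_3$ with $\abs{C^*} \leq t \max_{b \in S} f^-(b) \leq tr$, such that every $a \in A^*$ satisfies $e(a, C^*) \geq \deg^-(a) - K' n^{1 - 1/t} \geq \tau + 2tr - K' n^{1 - 1/t}$. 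Since trivially $e(a, C^*) \leq \abs{C^*} \leq tr$, combining these gives $\tau + tr \leq K' n^{1 - 1/t}$, contradicting $\tau \geq K n^{1 - 1/t}$ once $K$ exceeds $K'$.

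The density increment handles the remaining regime, in which $\abs{T}$ is much larger than $k$, so $\lambda$ is small and the bound $\abs{A^*} \geq \tfrac{1}{2}(\lambda/e)^t \abs{T} - n^{1/t}$ from \cref{lem:structure} is too weak. Here the plan is to use the abundance of $T$-vertices, all with $f^-(u) \geq \tau + 2tr$, to build a strictly larger $r'$-squad in $V_1$, contradicting maximality. Concretely, choosing $r'$ slightly above $\tau + 2tr$ and averaging the $\geq \abs{T}(\tau + 2tr)$ backward edges out of $T$ into $V_3$ against a tail bound on $\abs{\set{w \in V_3 \colon f^-(w) \geq M}}$, the plan is to locate $k/800$ vertices $u \in V_1$ with $f^-(u) \leq r'$ each having at least $k/800$ backward $2tr'$-boosters.

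The hardest step will be this large-$\abs{T}$ regime, where one must balance enlarging $r'$ (increasing the pool of candidate $u$) against tightening the booster condition on $u$'s backward neighbours $w \in V_3$. When the tail bound fails, i.e., the boosters out of $T$ concentrate on low-$f^-$ vertices of $V_3$, the backup plan is to count the triangles through $H$-edges and invoke \cref{cor:K11tdense} on the common neighbourhoods of booster edges to obtain $K_{t,t,t}$ directly. Each application of the increment multiplies $r$ by a factor of at least $2t$, so the procedure terminates after $\cO(\log n)$ iterations, either producing a $K_{t,t,t}$ or violating $r \leq n$.
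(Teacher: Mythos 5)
Your setup (maximal $r$-squad, the booster graph $H$ between $S$ and $T$, and the observation that every $u \in T$ has $f^-(u) \geq \tau + 2tr$) matches the paper, and your ``core step'' in the regime $\abs{T} = \cO(k)$ is a correct and rather clean shortcut: with constant density, \cref{lem:structure} applied to $(T, S, V_3)$ gives $\abs{C^\ast} \leq tr$ while every $a \in A^\ast$ has $e(a, C^\ast) \geq \tau + 2tr - \cO(n^{1-1/t}) > tr$, which is absurd, so $K_{t,t,t}$ must appear. The problem is that this easy regime is not where the difficulty lies, and your treatment of the main regime ($\abs{T}$ much larger than $k$, density of $H$ as small as $k/n$) has genuine gaps. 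First, the proposed construction of a larger squad does not work as described: an $r'$-squad with $r'$ ``slightly above $\tau + 2tr$'' needs $k/800$ vertices $u \in V_1$ with $f^-(u) \leq r'$ \emph{and} $k/800$ backward neighbours $w \in V_3$ with $f^-(w) \geq f^-(u) + 2tr'$. Your candidate vertices are the $T$-vertices, but these only satisfy $f^-(u) \geq \tau + 2tr$ with no upper bound (they could all have $f^-$-value, say, $10t(\tau + 2tr)$, disqualifying them for your $r'$), and no averaging of the backward edges out of $T$ can force their $V_3$-ends to have $f^-$-value exceeding $f^-(u)$ by $2tr'$ without first controlling the distribution of $f^-$-values in $V_3$ relative to the relevant scale. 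This is exactly what the paper's proof supplies via a second, nested maximality argument: it repeatedly applies \cref{lem:structure} to $G[B_m, C]$ (where $C$ is the low-$f^-$ part of $V_3$) to produce nested pairs $(A_m, B_m)$ with $e_H(b, A_m) \geq k/1000$ preserved while $\abs{A_m}$ shrinks geometrically, and the maximality of $m$ forces $\abs{C} \leq 2r$ at the scale $\abs{A_m}/(t2^{6t})$; only then does restricting to the low-$f^-$ part $A'_m$ of $A_m$ yield either a $2r$-larger squad (contradicting maximality of $r$) or a configuration dense enough for \cref{lem:denseheavywin}, with the high-$f^-$ part $A''_m$ handled separately by \cref{lem:coneigh_density} plus \cref{thm:KST}.

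Second, your fallback of ``counting triangles through $H$-edges and invoking \cref{cor:K11tdense}'' is essentially the naive booster-counting that the paper explicitly notes fails: with $e(H) \approx k^2$ edges of codegree $\approx \tau + 2tr$, the number of $K_{1,1,t}$s is of order $k^2 r^t$, which for $\abs{T} = \Theta(n)$ and $r = \Theta(k) = \Theta(n^{1-1/t})$ falls well short of the threshold $K(\abs{T}\abs{S}^{1-1/t} + \abs{S})\abs{V_3}^t$ required by \cref{lem:K11t}, and \cref{cor:K11tdense} with $C = V_3$ is unavailable because $\tau + 2tr$ need not be $\Omega(n)$. Restricting $C$ to a low-$f^-$ set only helps if that set has size $\cO(r)$, which is again precisely the structural control you have not established. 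Finally, the closing remark that the increment ``multiplies $r$ by at least $2t$'' and terminates after $\cO(\log n)$ iterations is both unsubstantiated and unnecessary: since $r$ is taken maximal at the outset, a single contradiction suffices, and the real work is in producing that contradiction in the sparse regime, which your sketch does not do.
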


We will first prove a result that converts the $k/100$-heavy edges of case (ii) into `big' boosters.

\begin{lem}\label{lem:squad}
    For every integer $t \geq 2$ there is a constant $K$ such that the following holds. Let $G$ be an $n \times n \times n$ tripartite graph with $\delta(G) \geq n + \tau$ where $\tau \geq Kn^{1 - 1/t}$. If case \textup{(}ii\textup{)} of \cref{lem:initialconfig} occurs for some $k \geq \tau/100$, then, either $G$ contains $K_{t, t, t}$ or there is some $r \geq 100k$ and $k/800$ vertices in a single part each having forward degree at most $r$ and each being incident to at least $k/800$ forward edges that are $2tr$-boosters.
\end{lem}

\begin{proof}
    Since case (ii) of \cref{lem:initialconfig} occurs we may assume, without loss of generality, that there is a set of $k/12$ vertices $A \subset V_1$ each having forward degree at most $100k$ and each being incident to at least $k/200$ forward edges that are $k/100$-heavy. Let $H$ be the subgraph of these $k/100$-heavy edges and let $B \subset V_2$ be the end-points of them in $V_2$. Partition $B$ into $B_1$ and $B_2$ where
    \begin{align*}
        B_1 \coloneqq \set{v \in B \colon \deg^+(v) \leq 300tk}, \\
        B_2 \coloneqq \set{v \in B \colon \deg^+(v) > 300tk}.
    \end{align*}
    Note that each edge between $A$ and $B_2$ is a $200tk$-booster. In particular, if $k/800$ of the vertices of $A$ have $k/800$ neighbours in $B_2$, then these vertices satisfy the conclusion of the lemma with $r = 100k$. Otherwise, there is a set $A' \subset A$ of at least $k/13$ vertices such that $e_H(a, B_1) \geq k/400$ for all $a \in A'$.

    Let
    \begin{equation*}
        C \coloneqq \set{c \in V_3 \colon \deg^+(c) \leq 900 t^2 k}.
    \end{equation*}
    Suppose that $\abs{C} \geq 200k$. Every vertex $a \in A' \subset A$ satisfies $\deg^-(a) = \deg(a) - \deg^+(a) \geq n - 100k$. Thus, every vertex of $A'$ has at most $100k$ non-neighbours in $V_3$ and so is adjacent to at least half of $C$. Thus $G[C, A']$ has density at least $1/2$. Applying the structural lemma, \cref{lem:structure}, to $G[C, A']$ with third part $B_1$ gives vertex-sets $A^\ast \subset A'$ and $B^\ast \subset B_1$ such that
    \begin{itemize}[noitemsep]
        \item $\abs{B^\ast} \leq t \cdot 900 t^2 k = 900 t^3k$,
        \item $\abs{A^\ast} \geq \frac{1}{2}(1/2e)^t \abs{A'} - n^{1/t}$,
        \item for all $a \in A^\ast$: $e(a, B^\ast) \geq e(a, B_1) - K_{\ref{lem:structure}} \cdot n^{1 - 1/t}$,
    \end{itemize}
    where the constant $K_{\ref{lem:structure}}$ depends only on $t$. We now consider $H[A^\ast, B^\ast]$ with the aim of applying \cref{lem:denseheavywin} with $r \coloneqq k/100$. 
    For every $a \in A^\ast$, $e_H(a, B^\ast) \geq e_H(a, B_1) - K_{\ref{lem:structure}} \cdot n^{1 - 1/t} \geq k/800$, provided $K$ is sufficiently large compared to $K_{\ref{lem:structure}}$ (note this also implies that $\abs{B^\ast} \geq k/800$). But then the first bullet point implies that the density $\lambda$ of $H[A^\ast, B^\ast]$ is at least $1/(720000t^3)$.
    Recall that $\abs{A'} \geq k/13$ and so the second bullet point implies that $\abs{A^\ast} \geq k/13^{t+1} \geq (K/13^{t+3}) \cdot n^{1 - 1/t} - n^{1/t} \geq (K/13^{t+4}) \cdot n^{1 - 1/t}$, provided $K$ is large enough. Also $\abs{B^\ast} \geq k/800$ and $r = k/100$ are both at least $K n^{1 - 1/t}/80000$.
    Finally, every vertex of $A^\ast \subseteq A$ has forward degree at most $100k = 10^4 r$ and every edge of $H[A^\ast, B^\ast]$ is $k/100$-heavy (since every edge of $H$ is). Thus, provided $K/13^{t+4} \geq K_{\ref{lem:denseheavywin}}$, applying \cref{lem:denseheavywin} with $1/(720000t^3)$ playing the role of $\lambda$ and $r$ playing its own role gives a $K_{t, t, t}$.

    Thus we may and will assume that $\abs{C} \leq 200k$. Call an edge of $H[A',B_1]$ \defn{concentrated} if it has at least $k/800$ common neighbours in $C$ and \defn{non-concentrated} otherwise: since every edge of $H$ is $k/100$-heavy, every non-concentrated edge has at least $k/800$ common neighbours in $V_3 \setminus C$. Suppose there is a vertex $a \in A'$ half of whose incident edges (in $H$) are not concentrated. This gives a set $B' \subset N_H(a) \cap B_1$ of size $k/800$ such that $a, b$ have at least $k/800$ common neighbours in $V_3 \setminus C$ for all $b \in B'$. Every vertex of $B'$ is in $B_1$ and so has forward degree at most $300tk$ while every vertex of $V_3 \setminus C$ has forward degree at least $900 t^2 k$ and so every vertex of $B'$ is incident to at least $k/800$ forward edges that are $600t^2k$-boosters. Hence $B'$ satisfies the conclusion of the lemma with $r = 300tk$. Otherwise at least half the edges of $H[A', B_1]$ are concentrated and so have $k/800 \geq \max\set{\abs{C}/160000, t}$ common neighbours in $C$. The number of such edges is at least
    \begin{equation*}
        \tfrac{1}{2} \cdot \abs{A'} \cdot k/400 \geq \frac{K}{80000} \cdot \abs{A'} \cdot n^{1 - 1/t}.
    \end{equation*}
    Also, provided $K$ is sufficiently large, $\abs{A'} \geq k/13 \geq n^{1/t}$ and hence \cref{cor:K11tdense} gives a $K_{t, t, t}$ in $G$.
\end{proof}

We are now ready to prove \cref{lem:lotsheavywin} and so finish the proof of \cref{thm:main}. Suppose that some $k \geq \tau/100$ as in \cref{lem:initialconfig} has been fixed. We define an \defn{$r$-squad} to be a set of $k/800$ vertices in a single part each having forward degree at most $r$ and each being incident to at least $k/800$ forward edges that are $2tr$-boosters. \Cref{lem:squad} guarantees that if case (ii) of \cref{lem:initialconfig} holds, then $G$ contains either $K_{t, t, t}$ or an $r$-squad for $r \geq 100k$. We take an $r$-squad with $r$ maximal and carry out a density increment argument relative to $r$ to show that in fact $G$ must contain $K_{t, t, t}$.

Since this is the most delicate argument in the paper, we now sketch its proof. Suppose that $G$ contains an $r$-squad $A \subseteq V_1$ with $r$ maximal. Let $H$ be the subgraph consisting of forward $2tr$-boosters incident with $A$, and set $B \coloneqq \bigcup_{v \in A} N_H(v) \subseteq V_2$. We begin by refining the pair $(A,B)$ via the following iterative procedure. As long as there exist a small subset $B' \subseteq B$ and a large subset $A' \subseteq A$ such that, for every $v \in A'$, the degree $e_H(v, B')$ is almost as large as $e_H(v, B)$, we replace $A$ by $A'$, $B$ by $B'$, and continue. When this procedure terminates, every vertex in $A$ still has degree $\Omega(k)$ into $B$ in $H$; additionally, the pair $(A,B)$ is now `nowhere dense', in the sense that no large subset of vertices in $A$ has its $H$-degrees concentrated into a small subset of $B$.

Next, let $C \subseteq V_3$ be the set of vertices $v$ with $\deg^+(v) \leq \gamma\abs{B}$ for some sufficiently small constant $\gamma > 0$. For each $v\in C$, we then have $\deg^-(v) \geq n - \gamma\abs{B}$, so $v$ is adjacent to most vertices of $B$. If the bipartite graph $G[A,C]$ had density $\Omega(1)$, then an application of \cref{lem:structure} would yield a large subset $A' \subseteq A$ whose degrees are concentrated into a small subset $B' \subseteq B$, contradicting the termination of the above procedure. Hence, $G[A, C]$ must be sparse. Since every vertex $v \in A$ has at least $n - r$ neighbours in $V_3 \supseteq C$, this forces $\abs{C} = \cO(r)$. 

Because $C$ is small, every $2tr$-booster in $H[A,B]$ has a large common neighbourhood in $V_3 \setminus C$. Now consider a triangle $abc \in A \times B \times (V_3 \setminus C)$. Recall that $\deg^+(a) \leq r$ and $\deg^+(c) \geq \gamma \abs{B}$. We are able to show that $\gamma \abs{B}$ is significantly larger than $r$, and so at least one of the edges $ab$ or $bc$ is in fact a $\gamma \abs{B}/4$-booster. As there are many such triangles, there must be many edges of this type. If a substantial fraction of them lie in $H[A, B]$, a counting argument produces a $K_{t,t,t}$. Consequently, most of these large boosters must lie in $G[B,V_3 \setminus C]$. Finally, this allows us to extract, from $B$, an $r'$-squad with $r' \approx \gamma\abs{B}/(3t) > r$, contradicting the maximality of $r$.

\begin{proof}[Proof of \cref{lem:lotsheavywin}]
    We begin by choosing an $r$-squad $A$ in $G$ for which $r$ is maximal. \Cref{lem:squad} guarantees that such a $A$ exists with $r \geq 100k$. By relabelling we may assume $A \subset V_1$. Let $H$ be the graph of forward $2tr$-booster edges incident with vertices in $A$ and let $B$ be the end-points of the edges in $H$ in $V_2$. Note that $\abs{A} \geq k/800 \geq K/80000 \cdot n^{1 - 1/t}$ and $e_H(a, B) \geq k/800 \geq K/80000 \cdot n^{1 - 1/t}$ for all $a \in A$.

    Let $K_{\ref{lem:structure}}$ be the constant obtained by applying \cref{lem:structure} to $t$ and $\lambda = 1/2$. We will now iteratively shrink the pair $(A, B)$ to a subgraph with higher density of $2tr$-boosters. We let $m \geq 0$ be maximal such that there are sets $A_m \subset \dotsb \subset A_1 \subset A_0 \coloneqq A$ and $B_m \subset \dotsb \subset B_1 \subset B_0 \coloneqq B$ such that, for all $i \in [m]$,
    \begin{itemize}
        \item $\abs{A_i} \geq \abs{A_{i - 1}}/2^{3t}$,
        \item $\abs{B_i} \leq \abs{B_{i - 1}}/2^{6t}$, and
        \item $e_H(a, B_i) \geq e_H(a, B_{i - 1}) - K_{\ref{lem:structure}} \cdot \abs{B_{i - 1}}^{1 - 1/t}$ for each $a \in A_i$. 
    \end{itemize}
    By the maximality of $m$, $G$ contains no subpair $A_{m + 1} \subset A_m$ and $B_{m + 1} \subset B_m$ satisfying all of the above. Now
    \begin{equation}\label{eq:sizeofAm}
        \abs{A_m} \geq \frac{\abs{A}}{2^{3tm}} \ge \frac{K}{80000} \cdot\frac{n^{1 - 1/t}}{2^{3tm}} \geq \frac{K}{80000} \Bigl(\frac{n}{2^{6tm}}\Bigr)^{1 - 1/t} \geq \frac{K}{80000} \cdot \abs{B_m}^{1 - 1/t}, 
    \end{equation}
    where the second equality uses $2 \leq 2^{2(1 - 1/t)}$ and the final inequality uses $\abs{B_0} \leq n$. For each $a \in A_m$ we have
    \begin{equation}\label{eq:degreetoBm}
        \begin{aligned}
            e_H(a, B_m) & \geq e_H(a, B) - K_{\ref{lem:structure}} \sum_{i = 0}^{m-1} \abs{B_i}^{1 - 1/t} \geq \frac{k}{800} - K_{\ref{lem:structure}} \cdot \abs{B}^{1 - 1/t} \sum_{i = 0}^{m-1} 2^{-3ti} \\ 
            & \geq \frac{k}{800} - K_{\ref{lem:structure}} \cdot n^{1 - 1/t} \cdot \frac{1}{1 - 2^{-3t} } \geq \frac{k}{1000},
        \end{aligned}
    \end{equation}
    where the second inequality uses $1/2 \leq 1 - 1/t$, the third is obtained by summing the geometric series, and the last one follows since $k \geq K n^{1 - 1/t}/100$ and $K$ is large relative to $K_{\ref{lem:structure}}$. Note that this implies
    \begin{equation}\label{eq:sizeofBm}
        \abs{B_m} \geq \frac{k}{1000}.
    \end{equation}
    We now improve this bound. Suppose that $\abs{B_m} \leq 6t^2 2^{6t} r$. Then, since $e_H(a, B_m) \geq k/1000$ for all $a \in A_m$, the density $\lambda$ of $H[A_m, B_m]$ is at least $k/(1000\abs{B_m})$. Note, also using \eqref{eq:sizeofAm}, that 
    \begin{equation*}
        \lambda \abs{A_m} \geq  \frac{K}{10^8} \biggl(\frac{k}{\abs{B_m}} \cdot \abs{B_m}^{1 - 1/t}\biggr) \geq \frac{K^2}{10^{10}} \cdot \frac{n^{1 - 1/t}}{\abs{B_m}^{1/t}} \geq \frac{K^2}{10^{10}} \cdot n^{1 - 2/t} \geq t,
    \end{equation*}
    provided $K$ is large enough. Thus we may apply \cref{lem:coneigh_density} to obtain vertices $a_1, \dotsc, a_t \allowbreak \in A_m$ and $B^\ast \subset N(a_1, \dotsc, a_t) \cap B_m$ such that
    \begin{equation*}
        \abs{B^\ast} \geq \frac{\lambda^t \abs{B_m} }{2 e^t} \stackrel{\eqref{eq:sizeofBm}}{\geq} \frac{k^t}{10^{4t} \cdot \abs{B_m}^{t - 1}} \geq \frac{K^t}{10^{9t^2}} \cdot  n^{t - 1} r^{1 - t} \geq Kn^{t - 1} r^{1 - t},
    \end{equation*}
    where the third inequality uses $\abs{B_m} \leq 6t^2 2^{6t} r \leq 10^{3t} r$ and the final inequality uses that $K$ is sufficiently large in terms of $t$. Since the edges of $H$ are $2tr$-heavy, every vertex of $B^\ast$ has at least $2tr$ neighbours in $V_3$. Furthermore, $\deg^-(a_i) = \deg(a_i) - \deg^+(a_i) \geq n - r$ and so each $a_i$ has at most $r$ non-neighbours in $V_3$. Thus each vertex of $B^\ast$ has at least $2tr - tr = tr$ neighbours in $N(a_1, \dotsc, a_t) \cap V_3$. Thus the number of edges between $B^\ast$ and $N(a_1, \dotsc, a_t) \cap V_3$ is at least
    \begin{equation*}
        \abs{B^\ast} \cdot tr \geq \abs{B^\ast} \cdot k + Kn^{t-1}r^{2-t}\geq  \frac{K}{100} \cdot(\abs{B^\ast} \cdot n^{1 - 1/t} + n),
    \end{equation*}
    where the final inequality uses $r \leq n$. Hence, provided $K$ is large enough, \cref{thm:KST} guarantees there is a $K_{t, t}$ in $G[B^\ast, N(a_1, \dotsc, a_t) \cap V_3]$. Together with $a_1, \dotsc, a_t$ this gives a $K_{t, t, t}$ in $G$. Hence, we may and will assume that
    \begin{equation}\label{eq:truesizeofBm}
        \abs{B_m} \geq 6t^2 2^{6t} r. 
    \end{equation}
    Let
    \begin{equation*}
        C \coloneqq \set{c \in V_3 \colon \deg^+(c) \leq \abs{B_m}/(t2^{6t})}.
    \end{equation*}
    Suppose that $\abs{C} \geq 2r$. Every vertex $a \in A_m$ satisfies $\deg^+(a) = \deg(a) - \deg^-(a) \geq n - r$. Thus, every vertex of $A_m$ has at most $r$ non-neighbours in $V_3$ and so is adjacent to at least half of $C$. Thus $G[C, A_m]$ has density $\lambda \geq 1/2$ and $\lambda \abs{C} \geq r \geq t$. Applying the structural lemma, \cref{lem:structure}, to $G[C, A_m]$ with third part $B_m$ gives vertex sets $A_{m + 1} \subset A_m$ and $B_{m + 1} \subset B_m$ such that
    \begin{itemize}[noitemsep]
        \item $\abs{B_{m + 1}} \leq t \cdot \abs{B_m}/(t2^{6t}) = \abs{B_m}/2^{6t}$,
        \item $\abs{A_{m + 1}} \geq \frac{1}{2}(1/2e)^t \abs{A_m} - \abs{B_m}^{1/t}$,
        \item for all $a \in A_{m + 1}$: $e(a, B_{m + 1}) \geq e(a, B_m) - K_{\ref{lem:structure}} \cdot \abs{B_m}^{1 - 1/t}$.
    \end{itemize}
    The first of these properties follows from the definition of $C$. From inequality~\eqref{eq:sizeofAm}, provided $K$ is large enough, $\abs{B_m}^{1/t} \leq \abs{B_m}^{1 - 1/t} \leq \frac{1}{4}(1/2e)^t \abs{A_m}$ and so $\abs{A_{m + 1}} \geq \frac{1}{4}(1/2e)^t \abs{A_m} \geq \abs{A_m}/2^{6t}$. But then the subpair $(A_{m + 1}, B_{m + 1})$ contradicts the maximality of $m$. Hence we may and will assume that
    \begin{equation}\label{eq:truesizeofC}
        \abs{C} \leq 2r.
    \end{equation}
    Partition $B_m$ into $B'_m$ and $B''_m$ where
    \begin{align*}
        B'_m & \coloneqq \set{b \in B_m \colon \deg^+(b) \leq \abs{B_m}/(3t^2 2^{6t})}, \\
        B''_m & \coloneqq \set{b \in B_m \colon \deg^+(b) > \abs{B_m}/(3t^2 2^{6t})}.
    \end{align*}
    
    \paragraph{Case 1: \texorpdfstring{$e_H(A_m, B'_m) \geq \frac{1}{2} e_H(A_m, B_m)$}{Dense to low forward degree}.} Then
    \begin{equation}\label{eq:AmBmprimeedges}
        e_H(A_m, B'_m) \geq \tfrac{1}{2} e_H(A_m, B_m) \stackrel{\eqref{eq:degreetoBm}}{\geq} \tfrac{1}{2} \abs{A_m} \cdot \tfrac{k}{1000},
    \end{equation}
    and so $\abs{B'_m} \geq k/2000$.

    Since each edge of $H$ is a $2tr$-booster, every vertex of $B'_m$ has at least $2tr$ neighbours in $V_3$. By~\eqref{eq:truesizeofC}, at most $2r$ of these are in $C$. Thus each vertex of $B'_m$ has at least $2r \geq k/800$ neighbours in $V_3 \setminus C$. Consider an edge $bv$ with $b \in B'_m$ and $v \in V_3 \setminus C$. By the definitions of $B'_m$ and $C$, we have
    \begin{equation*}
        \deg^+(v) - \deg^+(b) \geq \frac{\abs{B_m}}{t 2^{6t}} - \frac{\abs{B_m}}{3 t^2 2^{6t}} \geq 2t \cdot \frac{\abs{B_m}}{3t^2 2^{6t}},
    \end{equation*}
    and so every edge between $B'_m$ and $V_3 \setminus C$ is a $2tr'$-booster for $r' \coloneqq \abs{B_m}/(3t^2 2^{6t})$ and therefore every vertex of $B'_m$ is incident to at least $k/800$ forward edges that are $2tr'$-boosters.
    Also, every vertex of $B'_m$ has forward degree at most $r'$, by definition. Inequality~\eqref{eq:truesizeofBm} implies that $r' \geq 2r$ and so $B'_m$ forms an $r'$-squad contradicting the maximality of $r$ unless $\abs{B'_m} \leq k/800$. 
    
    If $\abs{B'_m} \leq k/800$, then \eqref{eq:AmBmprimeedges} implies the density of $H[A_m, B'_m]$ is at least $2/5$. Every edge of $H[A_m, B'_m]$ is $2tr$-heavy and $\deg^+(a) \leq r$ for all $a \in A_m \subset A$. Finally, by \eqref{eq:sizeofBm} and \eqref{eq:sizeofAm}, $\abs{A_m} \geq K/80000$ and $\abs{B_m} \geq K/10^5 \cdot n^{1 - 1/t}$ and so, provided $K$ is large enough, \cref{lem:denseheavywin} implies that $G$ contains $K_{t, t, t}$.

    \paragraph{Case 2: \texorpdfstring{$e_H(A_m, B''_m) \geq \frac{1}{2} e_H(A_m, B_m)$}{Dense to high forward degree}.} Then, by \eqref{eq:degreetoBm}, we have $e_H(A_m, B''_m) \geq \frac{1}{2} \cdot \abs{A_m} \cdot k/1000$ and so the density $\lambda$ of $H[A_m, B''_m]$ is at least $k/(2000\abs{B''_m})$. Note that 
    \begin{equation*}
        \lambda \abs{A_m} \stackrel{\eqref{eq:sizeofAm}}{\geq} \frac{K}{10^9} \cdot \frac{k}{\abs{B''_m}} \cdot \abs{B_m}^{1-1/t} \geq \frac{K^2}{10^{11}} \cdot \frac{n^{1-1/t}}{\abs{B_m}^{1/t}} \geq t,
    \end{equation*} 
    and so we may apply \cref{lem:coneigh_density} to obtain vertices $a_1, \dotsc, a_t \in A_m$ and and a set of vertices $B^\ast \subset N(a_1, \dotsc, a_t) \allowbreak \cap B''_m$ such that
    \begin{equation}\label{eq:sizeofBstar}
        \abs{B^\ast} \geq \tfrac{1}{2} (\lambda/e)^t \abs{B''_m} \geq \frac{k^t}{10^{5t} \abs{B''_m}^{t-1}} \geq \frac{K^t}{10^{7t}} n^{t - 1} \abs{B''_m}^{1 - t} \geq K n^{t - 1} \abs{B''_m}^{1 - t}.
    \end{equation}
    By the definition of $B''_m$, every vertex of $B^\ast \subset B''_m$ has at least $\abs{B_m}/(3t^2 2^{6t})$ neighbours in $V_3$. Also, since the edges of $H$ are $2tr$-heavy, every vertex of $B^\ast$ has at least $2tr$ neighbours in $V_3$. Furthermore, $\deg^-(a_i) = \deg(a_i) - \deg^+(a_i) \geq n - r$ and so each $a_i$ has at most $r$ non-neighbours in $V_3$. Thus each vertex of $B^\ast$ has at least
    \begin{equation*}
        \max\set{\abs{B_m}/(3t^2 2^{6t}), 2tr} - tr \geq \tfrac{1}{2}\max\set{\abs{B_m}/(3t^2 2^{6t}), 2tr} \geq \abs{B_m}/(6t^2 2^{6t})
    \end{equation*}
    neighbours in $N(a_1, \dotsc, a_t)$. Thus the number of edges between $B^\ast$ and $N(a_1, \dotsc, a_t) \cap V_3$ is at least
    \begin{align*}
        \abs{B^\ast} \cdot \abs{B_m}/(6t^2 2^{6t}) & \stackrel{\eqref{eq:sizeofBm}}{\geq} \frac{\abs{B^\ast} \cdot k}{2000 \cdot 6t^2 2^{6t}} + \frac{\abs{B^\ast} \cdot \abs{B_m}}{6t^2 2^{6t+1}} \\
        & \stackrel{\eqref{eq:sizeofBstar}}{\geq} \frac{K}{10^{10t}} \Bigl(\abs{B^\ast} \cdot n^{1 - 1/t} + n^{t - 1} \cdot \abs{B''_m}^{2 - t}\Bigr) \\
        & \geq \frac{K}{10^{10t}} (\abs{B^\ast} \cdot n^{1 - 1/t} + n),
    \end{align*}
    where the second inequality used $200000 \cdot 6t^22^{6t} \leq 10^{10t}$ and the final inequality used $\abs{B''_m} \leq n$. Thus, provided $K$ is large enough, \cref{thm:KST} guarantees that there is a $K_{t, t}$ in $G[B^\ast, N(a_1, \dotsc, a_t) \cap V_3]$. Together with $a_1, \dotsc, a_t$ this gives a $K_{t, t, t}$ in $G$.
\end{proof}

We are now ready to conclude the proof of our main result, \cref{thm:main}. Let $G$ be an $n \times n \times n$ tripartite graph with minimum degree $n + Kn^{1 - 1/t}$. \Cref{lem:initialconfig} guarantees there is an integer $k \geq K/100 \cdot n^{1 - 1/t}$ such that $G$ contains either a bipartite subgraph of booster edges with parts of size at least $k/200$ and density at least $1/200$ or $k/12$ vertices in one part each having forward degree at most $100k$ and each being incident to at least $k/200$ forward edges that are $k/100$-heavy. If the former occurs, then \cref{lem:denseboosterwin} implies that, provided $K$ is large enough, $G$ contains $K_{t, t, t}$. If the latter occurs, then \cref{lem:lotsheavywin} implies that, provided $K$ is large enough, $G$ contains $K_{t, t, t}$. In either case $G$ contains $K_{t, t, t}$, as required.

\section{Concluding remarks}\label{sec:conclud}

While \cref{thm:main} provides a satisfactory answer to Bollob\'{a}s, Erd\H{o}s, and Szemer\'{e}di's problem, \cref{prob:BES}, one could ask an even more general question.

\begin{prob}\label{prob:general}
    For each tripartite graph $H$, what is the smallest $\tau = \tau(n)$ such than any $n \times n \times n$ tripartite graph with minimum degree at least $n + \tau$ contains $H$\textup{?}
\end{prob}

For arbitrary $H$ this may be very hard indeed given our limited understanding of bipartite Tur\'{a}n numbers. One case that may be more approachable is $H = K_{t, s, r}$ for $t \leq s \leq r$.

\begin{conj}
    For all positive integers $t \leq s \leq r$, there is a constant $K$ such that, if $G$ is an $n \times n \times n$ tripartite graph and $\delta(G) \geq n + K n^{1 - 1/t}$, then $G$ contains $K_{t, s, r}$.
\end{conj}

If true, this would be best possible up to the constant $K$, owing to the extremal examples of \cref{sec:extremal}. Much of our argument still works here (note that the K\H{o}v\'{a}ri-S\'{o}s-Tur\'{a}n theorem, \cref{thm:KST}, actually gives a $K_{t, s}$ provided $K$ is large enough), but when $s$ is much bigger than $t$ case (ii) of \cref{lem:initialconfig} does not provide a high enough density of heavy edges to proceed.

Another possible direction would be to consider an $r$-partite version for $r \geq 4$. Bollob\'{a}s, Erd\H{o}s, and Szemer\'{e}di~\cite{BES75} asked what minimum degree $c_r n + 1$ guarantees the presence of $K_r$ in an $n \times \dotsb \times n$ $r$-partite graph -- that is, in an $r$-partite graph where all parts have size $n$. This spurred on a line of research~\cite{Alon88,Jin92} with the solution for even $r$ given by Szab\'{o} and Tardos~\cite{SzaboTardos06} and for odd $r$ by Haxell and Szab\'{o}~\cite{HaxellSzabo06}. It is natural to ask the following.

\begin{prob}
    For integers $r \geq 4$ and $t \geq 2$, what is the smallest $\tau = \tau(n)$ such that every $n \times \dotsb \times n$ $r$-partite graph with minimum degree at least $c_r n + \tau$ contains $K_r(t)$\textup{?}
\end{prob}

\paragraph{Acknowledgements.} We would like to thank Peter Allen, Julia B\"{o}ttcher, Jasmin Katz, and Jozef Skokan for valuable discussions. We thank the anonymous referees for their thorough reading and helpful suggestions.

{
\fontsize{11pt}{12pt}
\selectfont
	
\hypersetup{linkcolor={red!70!black}}
\setlength{\parskip}{2pt plus 0.3ex minus 0.3ex}

\newcommand{\etalchar}[1]{$^{#1}$}

}

\end{document}